\documentclass[reqno]{amsart}
\usepackage[utf8]{inputenc}
\usepackage{amsmath}
\usepackage{amsfonts}
\usepackage{amssymb}
\usepackage{amsthm}
\usepackage{mathrsfs}
\usepackage{enumitem}
\usepackage[a4paper, margin=1.5in]{geometry}

\newtheorem{theorem}{Theorem}
\newtheorem{lemma}{Lemma}
\newtheorem{corollary}{Corollary}
\newtheorem{proposition}{Proposition}
\theoremstyle{definition}
\newtheorem{definition}{Definition}
\newtheorem{example}{Example}
\newtheorem{question}{Question}
\newtheorem{remark}{Remark}

\newtheorem*{declaration}{Declaration of AI use}

\newcommand{\dnint}[1]{\left\|{#1}\right\|}
\newcommand{\floor}[1]{\left\lfloor{#1}\right\rfloor}
\renewcommand{\mod}{\;\mathrm{mod}\;}

\title[On a Notion of Density]{On a Notion of Density Related to Restricted Diophantine Approximation}
\author{Chance Sanford}
\address{Madison, Wisconsin, USA}
\email{sanfordchance@gmail.com}

\date{January 2026}

\begin{document}

\begin{abstract}
    We introduce notion of density for a subset of the natural numbers called the Diophantine density.  First, we demonstrate that the Diophantine density is an abstract density in the sense of Di Nasso and Jin.  Then we establish upper and lower bounds for the Diophantine density of a set and its complement respectively.  We also exhibit a construction which allows us to find a set whose density can be any value between zero and one.  Finally, we prove a restricted-denominator analog of Dirichlet's approximation theorem for sets of non-zero Diophantine density.
\end{abstract}

\maketitle
\tableofcontents
\section{Introduction}
One of the foundational results in theory Diophantine approximation is Dirichlet's approximation theorem.
\begin{theorem}[Dirichlet]\label{thm:dirichlet}
For any real numbers $\xi$ and $N \geq 1$, 
\begin{equation*}
    \min_{1\leq n\leq N}\dnint{n\xi} \leq (\lfloor{N}\rfloor + 1)^{-1} < N^{-1}.
\end{equation*}
\end{theorem}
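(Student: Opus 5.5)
We will prove this by the pigeonhole principle applied to fractional parts. Set $M = \floor{N}$, so $M \geq 1$ is an integer, and since $\{n : 1 \leq n \leq N\} = \{1,\dots,M\}$ it suffices to show $\min_{1\leq n\leq M}\dnint{n\xi} \leq (M+1)^{-1}$. The strict inequality $(M+1)^{-1} < N^{-1}$ is immediate from $N < \floor{N}+1$.

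Consider the $M+2$ numbers $0, 1, \{\xi\}, \{2\xi\}, \dots, \{M\xi\}$ in $[0,1]$, where $\{x\} = x - \floor{x}$. Partition $[0,1]$ into the $M+1$ closed intervals $I_k = [k/(M+1), (k+1)/(M+1)]$ for $0 \leq k \leq M$. By pigeonhole, two of these $M+2$ points lie in a common $I_k$, so they differ by at most $(M+1)^{-1}$. Each point has the form $a - b\xi$ with integers $a$ and $0\leq b\leq M$: $0 = 0 - 0\cdot\xi$, $1 = 1 - 0\cdot\xi$, and $\{n\xi\} = \floor{n\xi} - (-n)\xi$. Equivalently, each point is $n\xi - p$ with $n \in \{0,1,\dots,M\}$ and $p \in \mathbb{Z}$.

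Taking the difference of the two coincident-box points gives integers $n \in \{-M,\dots,M\}$ and $p$ with $|n\xi - p| \leq (M+1)^{-1}$. The step needing care is ruling out $n = 0$. The only two points with $n$-coefficient $0$ are $0$ and $1$, and they cannot lie in the same $I_k$ because $M \geq 1$ gives each $I_k$ length at most $1/2 < 1$. Any other pair has distinct $n$-values, so their difference has $1 \leq |n| \leq M$. Since $\dnint{(-n)\xi} = \dnint{n\xi}$, we may take $n \geq 1$, and $\dnint{n\xi} \leq |n\xi - p| \leq (M+1)^{-1}$, completing the proof.

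The argument is routine. The only points to watch are including both endpoints $0$ and $1$, to obtain $M+2$ pigeons and hence the sharp $(M+1)^{-1}$ rather than $M^{-1}$, and reducing the real $N$ to $\floor{N}$.
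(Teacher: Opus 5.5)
The paper gives no proof of this statement. It quotes Dirichlet's theorem as a classical result and only applies it, always with rational $\xi$ (for example $\xi = mp/q$ and $\xi = d_k p/q$).

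Your pigeonhole argument is the standard proof, and it is correct. It handles the two points that matter for the sharp constant $(\lfloor N\rfloor+1)^{-1}$:
\begin{itemize}
\item you use both endpoints $0$ and $1$, which gives $M+2$ pigeons;
\item you note that $0$ and $1$ cannot share an interval of length $1/(M+1)\le 1/2$, so the colliding pair has a nonzero $n$-coefficient.
\end{itemize}
Because you pigeonhole labeled points rather than distinct values, the argument also covers rational $\xi$, where some of the $\{n\xi\}$ may coincide. That is exactly the case the paper needs.

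There is one small slip. The expression $\lfloor n\xi\rfloor - (-n)\xi$ equals $\lfloor n\xi\rfloor + n\xi$, not $\{n\xi\}$. Also, taking $b=-n$ contradicts your requirement $0\le b\le M$. The form you give immediately afterwards, $\{n\xi\} = n\xi - \lfloor n\xi\rfloor$, is correct, and it is the one the rest of your argument uses, so the proof is unaffected.
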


 It is natural to wonder how the quality of approximation in Dirichlet's theorem would be affected if the approximating integers are restricted to some subset of the natural numbers. This question has received considerable attention over the years, with much of the research focused on restrictions to arithmetically significant subsets of the natural numbers. The arithmetic structure of these sets often allows for techniques from analytic number theory to be applied.  The following list gives a sample of the variety of sets which have been considered in the past.

\begin{itemize}
    \item Prime numbers (Matom{\"a}ki \cite{matomaki2009}).
    \item Square-free numbers (Harman \cite{harman1984diophantine}).
    \item Perfect squares (Zaharescu \cite{zaharescu1995small}).
    \item Numbers formed by the digits $0$ and $1$ in a given base (Iyer \cite{iyer2025rational}).
    \item Values of a quadratic form (Baier and Rahaman \cite{baier2025diophantine}).
\end{itemize}

To give a specific example, we present Zaharescu's main theorem from \cite{zaharescu1995small}. 
\begin{theorem}[Zaharescu]\label{thm:zaharescu}
    For any real number $\xi$, positive integer $N$, and $\varepsilon > 0$,
    \begin{equation*}
        \min_{1\leq n\leq N}\dnint{n^{2}\xi} \ll N^{-4/7+\varepsilon}.
    \end{equation*}
\end{theorem}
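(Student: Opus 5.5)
Since this is a cited result, I will only sketch the strategy, which is the Fourier-analytic detection method behind results of Heilbronn type, sharpened by a split into cases according to the rational approximations of $\xi$.

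\emph{Setting up the contradiction.} Put $\delta = N^{-4/7+\varepsilon}$ and suppose $\dnint{n^2\xi} > \delta$ for every $1\le n\le N$. First apply Theorem~\ref{thm:dirichlet} to $\xi$ with a parameter $Q$, to be chosen as a power of $N$. This gives $\xi = a/q + \beta$ with $1\le q\le Q$, $(a,q)=1$ and $|\beta| \le 1/(qQ)$.

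\emph{Small $q$.} If $q$ is small, take $n=qm$. Then $n^2\xi \equiv q^2m^2\beta \pmod 1$, and $|q^2m^2\beta|\le qm^2/Q$. Taking $m=1$, or $m$ chosen so that $q^2m^2\beta$ lands near a nonzero integer when $\beta$ is too large, gives $\dnint{n^2\xi}<\delta$ whenever $q \ll \delta Q$. This is a contradiction.

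\emph{Large $q$.} Here I would use a Vaaler (or Erdős–Turán) trigonometric minorant of the indicator of $[-\delta,\delta]$, of degree $H\asymp\delta^{-1}$. This gives
\begin{equation*}
\#\{n\le N:\dnint{n^2\xi}\le\delta\}\;\ge\; 2\delta N - \frac{C}{H}\sum_{1\le h\le H}\Bigl|\sum_{n\le N}e(hn^2\xi)\Bigr| - O(N/H).
\end{equation*}
It then suffices to show that the average of the Weyl sums $S(h)=\sum_{n\le N}e(hn^2\xi)$ is $o(\delta N)$. One Weyl differencing gives
\begin{equation*}
|S(h)|^2\ll N+\sum_{k\le N}\min\bigl(N,\dnint{2hk\xi}^{-1}\bigr).
\end{equation*}
Summing over $h\le H$ and collecting $r=2hk$ with the divisor bound $d(r)\ll N^{\varepsilon}$, one reduces to $\sum_{r\le 2HN}\min(N,\dnint{r\xi}^{-1})$. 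The standard estimate with $\xi\approx a/q$ bounds this by $(HN/q+1)(N+q\log q)N^{\varepsilon}$. This alone yields only the classical exponent $1/2$.

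\emph{The main obstacle: the intermediate range of $q$.} Beating $1/2$ and reaching $4/7$ is the hard step. Here I would not discard the phase information. Instead I would evaluate $S(h)$ asymptotically on major arcs, writing $S(h)$ as a quadratic Gauss sum $G(ha,q)/q$ times an integral $\int_0^N e(h\beta t^2)\,dt$. I would then apply a second Dirichlet approximation to $h\xi$ for each $h$ falling in the minor arcs. Balancing the three regimes (small $q$; major arcs, using $|G|\ll\sqrt q$ and the decay of the integral in $h|\beta|N^2$; minor arcs via the differencing bound above) by choosing $Q$ and the arc widths as suitable powers of $N$ should produce the exponent $4/7$. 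Getting this exact balance, and controlling the $h$ that share large common factors with $q$, is where I expect the real difficulty, as in Zaharescu's argument \cite{zaharescu1995small}.
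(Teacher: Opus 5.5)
The paper gives no proof of this result; it is quoted from \cite{zaharescu1995small}. Your sketch is not a proof either. The gap is also more serious than the unfinished balancing you flag at the end: the route you set up cannot get past the exponent $1/2$ at all.

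After taking absolute values in the minorant inequality, you need $\tfrac{1}{H}\sum_{h\le H}|S(h)|=o(\delta N)$. With $\delta=N^{-4/7+\varepsilon}$ this asks for $|S(h)|\ll N^{3/7+\varepsilon}$ on average over $h\le H$, i.e.\ for better than square-root cancellation in the Weyl sums. One cannot expect this for typical $\xi$, for instance badly approximable ones, which fall outside your small-$q$ case:
\begin{itemize}
\item $|S(h)|$ is generically of order $N^{1/2}$; the diagonal alone contributes $HN$ to $\sum_{h\le H}|S(h)|^2$.
\item The same inequality would detect nothing below $N^{-1/2}$ even for a random sequence in place of $n^2\xi$, although its minimum distance to $\mathbb{Z}$ is of order $N^{-1}$.
\end{itemize}
Your differencing inequality shows the obstruction explicitly. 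Its right-hand side contains the diagonal term $N$, so it can never certify $|S(h)|\le N^{3/7}$. Consequently, evaluating $S(h)$ on major arcs with $|G(ha,q)|\ll\sqrt q$, bounding it on minor arcs by differencing, and balancing the regimes only produces bounds for individual $|S(h)|$. No such combination can be expected to go beyond Heilbronn's exponent $1/2$. The phase information you say you want to keep is already discarded at your first display.

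What is missing is an input that avoids $\sum_h|S(h)|$ altogether. Two possibilities:
\begin{itemize}
\item exploit cancellation among the terms $c_hS(h)$ for different $h$, treating $\sum_h\sum_{n\le N}c_h e(hn^2\xi)$ as a genuinely two-variable sum; or
\item count solutions of $an^2\equiv m\pmod{q}$ with $n\le N$, $|m|\le\delta q$ by some other method in the range $q>N$.
\end{itemize}
Some input of this kind is needed to beat $1/2$, so it must be where the new content of Zaharescu's paper lies. Your sketch neither provides it nor indicates its shape.

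A smaller point: in your small-$q$ case the choice $n=q$ also requires $q\le N$. If $Q$ is a large power of $N$, so that $\delta Q>N$, the condition $q\ll\delta Q$ alone does not produce an admissible $n$. The range $N<q\le\delta Q$ then has to be handled with the large-$q$ cases.
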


Results such as Zaharescu's theorem suggest studying subsets of the natural numbers according to the quality of Diophantine approximation they permit. In this paper, we take up this task by introducing the \textit{Diophantine density} of a set.  We show that dense sets in the Diophantine sense, also produce good Diophantine approximations.  Moreover, we hope to demonstrate that aside from its connection to rational approximation, the Diophantine density possesses a number of other interesting properties.

In the next section, we formally define Diophantine density and show that it possesses the properties of an \textit{abstract density}. Section 3 provides two theorems giving lower and upper bounds for the Diophantine density of a set. In Section 4, we construct a set with Diophantine density equal to any prescribed value in $(0,1)$. Section 5 establishes a connection back to Diophantine approximation by proving an analogue of Dirichlet's theorem in which the denominators of the approximating rationals are restricted to a set of positive Diophantine density.

Section 6 concerns sets arising from Piatetski-Shapiro sequences. Using the results of the preceding sections together with Zaharescu's theorem, we establish explicit lower and upper bounds for the Diophantine density of the set of perfect squares. Finally, Section 7 presents several open problems.

\section{Diophantine density}\label{sect:diophantine_density}
The following definition gives us our principal object of interest for the remainder of paper.
\begin{definition}\label{def:diophantine_density}
    The \textit{Diophantine density} $\delta(\mathcal{A})$ of a non-empty set $\mathcal{A} \subseteq \mathbb{N}$ is defined as the supremum of all real numbers $\delta$, such that there exists a $C(\mathcal{A},\delta) \in \mathbb{R}$ and a $q_{0}(\mathcal{A},\delta) \in \mathbb{N}$, for which
    \begin{equation*}
    \min_{v\in\mathcal{A},v < q}\dnint{\frac{v p}{q}} \leq C(\mathcal{A},\delta) q^{-\delta},
\end{equation*}
for every coprime pair $(p,q) \in \mathbb{Z} \times \mathbb{N}$ with $q \geq q_{0}(\mathcal{A},\delta)$.  For the empty set, we define $\delta(\varnothing) := 0$.
\end{definition}

The purpose of this section is to establish some of the basic properties of the Diophantine density and in particular, show that it possesses the features that one would expect a density to possess.  The question of what properties a ``generic" or ``abstract" density should satisfy has been raised by a number of people over the years.  The definition given below was proposed by Di Nasso and Jin \cite{dinasso2017abstract}.

\begin{definition}
    An \textit{abstract density} on $\mathbb{N}$ is a function $d : 2^{\mathbb{N}} \to [0,1]$ that satisfies the following properties:
    \begin{enumerate}[itemsep=0.25em, label=(\roman*)]
        \item $d(\mathbb{N}) = 1$.\label{item:full_density}
        \item $d(\mathcal{A}) = 0$ for every finite $\mathcal{A} \subset \mathbb{N}$.
        \item \textit{Monotonicity:} If $\mathcal{A} \subseteq \mathcal{B}$ then $d(\mathcal{A}) \leq d(\mathcal{B})$. \label{item:monotonicity}
    \end{enumerate}
\end{definition}

\begin{theorem}\label{thm:dd_is_abstract}
    The Diophantine density is an abstract density.
\end{theorem}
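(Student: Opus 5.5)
The plan is to check the three defining properties of an abstract density directly from Definition~\ref{def:diophantine_density}, after first checking that $\delta$ takes values in $[0,1]$. Throughout I take $\mathbb{N}=\{1,2,3,\dots\}$. It is convenient to call a real $\delta$ \emph{admissible} for $\mathcal{A}$ if constants $C(\mathcal{A},\delta)$ and $q_0(\mathcal{A},\delta)$ as in the definition exist. Then $\delta(\mathcal{A})$ is the supremum of the admissible $\delta$.

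\emph{Range.} Let $\mathcal{A}\neq\varnothing$.
\begin{itemize}
\item For the lower bound, take $q_0>\min\mathcal{A}$. Then the minimum is over a non-empty set, and $\dnint{\cdot}\le 1/2$ always. So $\delta=0$ is admissible with $C=1/2$, and $\delta(\mathcal{A})\ge 0$.
\item For the upper bound, test with $p=1$, which is coprime to every $q$. For $1\le v<q$ we have $\dnint{v/q}\ge 1/q$. Hence any admissible $\delta$ satisfies $1/q\le Cq^{-\delta}$ for all large $q$, which forces $\delta\le 1$. Thus $\delta(\mathcal{A})\le 1$.
\end{itemize}
Since $\delta(\varnothing)=0$ by definition, $\delta$ maps $2^{\mathbb{N}}$ into $[0,1]$.

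\emph{Property (i).} For $\delta(\mathbb{N})=1$, apply Theorem~\ref{thm:dirichlet} with $\xi=p/q$ and $N=q-1$ (so $q\ge 2$). This gives some $1\le n\le q-1$ with $\dnint{np/q}\le 1/q$. Therefore $\delta=1$ is admissible for $\mathbb{N}$ with $C=1$ and $q_0=2$, so $\delta(\mathbb{N})\ge 1$. The upper bound from the range step gives equality.

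\emph{Monotonicity (iii).} Suppose $\mathcal{A}\subseteq\mathcal{B}$. If $\mathcal{A}=\varnothing$, then $\delta(\mathcal{A})=0\le\delta(\mathcal{B})$ by the range step. Otherwise, for each $q$ the minimum over $\{v\in\mathcal{B}: v<q\}$ is at most the minimum over $\{v\in\mathcal{A}: v<q\}$. So every $\delta$ admissible for $\mathcal{A}$ is admissible for $\mathcal{B}$ with the same $C$ and $q_0$. Taking suprema gives $\delta(\mathcal{A})\le\delta(\mathcal{B})$.

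\emph{Property (ii).} This is the only step needing a construction, and I expect it to be the main obstacle. Let $\mathcal{A}$ be finite and non-empty with $M=\max\mathcal{A}$. Suppose some $\delta>0$ were admissible. I would look only at primes $q\ge\max(q_0,4M)$, so that every $1\le p<q$ is coprime to $q$. Choose $p=\floor{q/(2M)}$, which is at least $1$.
\begin{itemize}
\item For every $v\in\mathcal{A}$ we have $1\le v\le M$. Hence $p/q\le vp/q\le Mp/q\le 1/2$.
\item It follows that $\dnint{vp/q}=vp/q\ge p/q$.
\item For large $q$ we have $p/q\ge 1/(4M)$.
\end{itemize}
So the minimum in the definition is bounded below by the constant $1/(4M)$. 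But admissibility would require it to be at most $Cq^{-\delta}$, which tends to $0$ along the primes, a contradiction. Hence no positive $\delta$ is admissible, and $\delta(\mathcal{A})=0$; this completes the verification of (i)–(iii).
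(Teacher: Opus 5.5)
Your proof is correct. It uses the same decomposition as the paper (range, then (i)--(iii), spread over Propositions~\ref{prop:n_full_density}--\ref{prop:delta_bounds}), but several steps are argued differently.

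The substantive difference is in (ii). In Proposition~\ref{prop:zero_finite_sets} the paper takes reduced rationals $p_n/q_n$ converging to an irrational $\xi$. Pigeonhole then gives a single $v\in\mathcal{A}$ that works for infinitely many $n$, and passing to the limit yields $\dnint{v\xi}=0$, a contradiction. You instead exhibit explicit witnesses $p=\floor{q/(2M)}$ with $q$ prime. This gives the uniform bound $\min_{v\in\mathcal{A}}\dnint{vp/q}\ge 1/(4M)$ along all large primes.

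Your version is quantitative, avoids limits and irrationals, and gets coprimality for free from primality. The paper's version is shorter, and it is the same pigeonhole-and-limit device reused later to show $v_n\to\infty$ in Theorem~\ref{thm:main}. Your witnesses converge to the rational $1/(2M)$. This shows the paper's limiting argument really only needs $q_n\to\infty$ and $\dnint{v\xi}\neq 0$ for $v\in\mathcal{A}$, not irrationality of $\xi$.

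The smaller differences are these:
\begin{itemize}
\item For $\delta(\mathbb{N})\ge 1$ you apply Dirichlet's theorem with $N=q-1$, where the paper takes $v=p^{-1}\bmod q$.
\item For $\delta(\mathcal{A})\le 1$ you test directly with $p=1$, whereas the paper deduces it from $\delta(\mathbb{N})\le 1$ and monotonicity.
\item You handle two edge cases the paper passes over: the minimum is over a non-empty set once $q>\min\mathcal{A}$, and the case $\mathcal{A}=\varnothing$ in monotonicity.
\end{itemize}
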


Our proof of Theorem \ref{thm:dd_is_abstract} will be spread out over the upcoming four propositions.  Each of which will establish one of Properties \ref{item:full_density} - \ref{item:monotonicity} as well as the fact that the codomain of the Diophantine density is $[0,1]$.  We begin by showing that the Diophantine density satisfies Property \ref{item:full_density}.

\begin{proposition}\label{prop:n_full_density}
    The Diophantine density of $\mathbb{N}$ is given by $\delta(\mathbb{N}) = 1$.
\end{proposition}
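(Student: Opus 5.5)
Two inequalities are needed: $\delta(\mathbb{N}) \geq 1$ and $\delta(\mathbb{N}) \leq 1$.

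\emph{Lower bound.} For any $q \geq 2$ and any $p$ coprime to $q$, the map $v \mapsto vp \bmod q$ is a bijection on $\mathbb{Z}/q\mathbb{Z}$. Hence some $v$ with $1 \leq v \leq q-1$ satisfies $vp \equiv 1 \pmod q$, and for this $v$
\begin{equation*}
\dnint{\frac{vp}{q}} = \frac{1}{q}.
\end{equation*}
So $\min_{v<q}\dnint{vp/q} \leq q^{-1}$. The defining inequality therefore holds with $\delta = 1$, $C = 1$ and $q_0 = 2$. This gives $\delta(\mathbb{N}) \geq 1$. Alternatively, one can apply Theorem \ref{thm:dirichlet} with $\xi = p/q$ and $N = q-1$, which gives a bound of $q^{-1}$ directly.

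\emph{Upper bound.} I must show that no $\delta > 1$ is admissible. The point is that every term satisfies $\dnint{vp/q} \geq 1/q$ whenever $q \nmid vp$. Since $\gcd(p,q) = 1$ and $1 \leq v < q$, we have $q \nmid v$, so $q \nmid vp$. Thus $vp/q$ is a non-integer rational with denominator dividing $q$, and its distance to the nearest integer is at least $1/q$. Consequently
\begin{equation*}
\min_{v<q}\dnint{\frac{vp}{q}} \geq \frac{1}{q}
\end{equation*}
for every coprime pair with $q \geq 2$. If some $\delta > 1$ were admissible with constant $C$, we would get $q^{-1} \leq C q^{-\delta}$, that is, $q^{\delta-1} \leq C$, for all $q \geq q_0$. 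This is impossible as $q \to \infty$. Hence $\delta(\mathbb{N}) \leq 1$.

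Both steps are elementary. The only point needing care is the definitional subtlety of the supremum: the upper bound must exclude all $\delta > 1$, not merely show that $\delta = 1$ is attained. The argument above handles this.
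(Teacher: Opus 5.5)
Your proof is correct and follows the paper's argument: the inverse of $p$ modulo $q$ gives $\dnint{vp/q} = 1/q$ for the lower bound, and the impossibility of $q^{\delta-1} \leq C$ for all large $q$ rules out every $\delta > 1$. You are also slightly more careful than the paper, which uses $\min_{v<q}\dnint{vp/q} \geq 1/q$ only tacitly, whereas you justify it from $q \nmid vp$.
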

\begin{proof}
    For every coprime pair $(p,q) \in \mathbb{Z} \times \mathbb{N}$ we can set $v = (p^{-1} \mod q)$, since $p$ is invertible modulo $q$.  In doing so, we see that $v < q$ and
    \begin{equation*}
        \dnint{\frac{v p}{q}} = \frac{1}{q}.
    \end{equation*}
    This shows immediately that $\delta(\mathcal{\mathbb{N}}) \geq 1$.
    
    To see that this is the largest possible value for $\delta(\mathcal{\mathbb{N}})$, let us assume that some larger value is achievable.  Then there would be a $\delta > 1$ and $C> 0$ such that $q^{-1} \leq Cq^{-\delta}$.  But this implies that $q^{\delta-1} \leq C$ for all sufficiently large $q$, which is impossible since $\delta - 1 > 0$.  Therefore, we see that $\delta(\mathbb{N}) = 1$.
\end{proof}

\begin{proposition}\label{prop:zero_finite_sets}
    If $\mathcal{A} \subseteq  \mathbb{N}$ is finite, then $\delta(\mathcal{A})  = 0$.
\end{proposition}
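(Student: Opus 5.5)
The plan is to prove $\delta(\mathcal{A}) \geq 0$ and $\delta(\mathcal{A}) \leq 0$ separately. The empty set is covered by the convention $\delta(\varnothing) := 0$, so assume $\mathcal{A}$ is non-empty and finite, and put $M := \max \mathcal{A}$ and $m := \min \mathcal{A}$.

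\emph{Lower bound.} I will check that $\delta = 0$ is admissible in Definition~\ref{def:diophantine_density}. Take $q_{0} := m+1$. For every $q \geq q_{0}$ the set $\{v \in \mathcal{A} : v < q\}$ contains $m$, so the minimum is taken over a non-empty set. Since $\dnint{x} \leq 1/2$ for every real $x$, the condition holds with $C = 1/2$ and $\delta = 0$. Hence the supremum is at least $0$.

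\emph{Upper bound.} I will show that no $\delta > 0$ is admissible, by exhibiting infinitely many coprime pairs $(p,q)$ for which every $\dnint{vp/q}$ with $v \in \mathcal{A}$ stays bounded away from $0$. Take $q$ to be a large prime and set $p := \floor{q/(2M)}$. Then $1 \leq p < q$ for $q$ large, so $\gcd(p,q) = 1$.

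For $1 \leq v \leq M$ we have $p \leq q/(2M)$, which gives $vp/q \leq v/(2M) \leq 1/2$. We also have $p > q/(2M) - 1$, which gives
\begin{equation*}
\frac{vp}{q} > \frac{v}{2M} - \frac{v}{q} \geq \frac{1}{2M} - \frac{M}{q} \geq \frac{1}{4M}
\end{equation*}
once $q \geq 4M^{2}$. So $vp/q \in [1/(4M), 1/2]$, and therefore $\dnint{vp/q} \geq 1/(4M)$ for every $v \in \mathcal{A}$.

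Now suppose some $\delta > 0$ were admissible with constants $C$ and $q_{0}$. Applying the condition to these pairs with arbitrarily large prime $q \geq q_{0}$ would give $1/(4M) \leq C q^{-\delta}$, which fails as $q \to \infty$. Hence $\delta(\mathcal{A}) \leq 0$, and combined with the lower bound, $\delta(\mathcal{A}) = 0$.

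The only real content is choosing $p$ so that all the (finitely many) products $vp$ land in the middle of $\mathbb{Z}/q\mathbb{Z}$ simultaneously. Taking $p \approx q/(2M)$ together with $q$ prime handles both this and the coprimality requirement, so I do not expect a genuine obstacle.
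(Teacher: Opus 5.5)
Your proof is correct, and it takes a different route from the paper. The paper argues by contradiction without any explicit construction. It fixes a sequence of reduced fractions $p_n/q_n$ converging to an irrational $\xi$ (irrationality is implicit there) and uses the pigeonhole principle to find a single $v\in\mathcal{A}$ that is good for infinitely many $n$. Passing to the limit gives $\dnint{v\xi}=0$, which is the contradiction. You instead construct the bad fractions explicitly: $q$ prime and $p=\floor{q/(2M)}$ force every $vp/q$ with $v\in\mathcal{A}$ into $[1/(4M),1/2]$. Hence the minimum is at least $1/(4M)$ for every large prime $q$. In effect you run the paper's argument at the rational point $1/(2M)$, which is legitimate because $\dnint{v/(2M)}\geq 1/(2M)$ for $1\le v\le M$, and you make the perturbation step quantitative. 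This mirrors the paper's use of the fractions $n/(2mn+1)$ in its proof that $\delta$ is not translation invariant. Your version gives an explicit uniform lower bound and avoids limits and irrational numbers. It also verifies $\delta(\mathcal{A})\geq 0$ directly, including that the minimum is over a nonempty set once $q\geq m+1$; the paper's proof leaves that to Proposition~\ref{prop:delta_bounds}. The paper's version is more general: along \emph{any} sequence of reduced fractions tending to \emph{any} irrational, the minimum over a finite set stays bounded away from $0$. The same pigeonhole-and-limit device is reused in the proof of Theorem~\ref{thm:main} to show that $v_n\to\infty$.
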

\begin{proof}
    Aiming for a contradiction, let us assume that $\delta(\mathcal{A})  > 0$ and consider an infinite sequence of reduced rationals $(p_{n}/q_{n})_{n\geq 0}$, such that  $p_{n}/q_{n} \to \xi$ as $n \to \infty$.  By definition, for sufficiently large $n$, there are real numbers $\delta$ and $C(\mathcal{A},\delta)$ both greater than zero, such that
    \begin{equation*}
    \min_{v\in\mathcal{A},v < q_{n}}\dnint{\frac{v p_{n}}{q_{n}}} \leq C(\mathcal{A},\delta) q_{n}^{-\delta}.
\end{equation*}

Since $\mathcal{A}$ is finite and the sequence of rationals is infinite, by Dirichlet's pigeonhole principle, there must be a single $v \in \mathcal{A}$ that satisfies the above inequality for infinitely many $n$.  

Let us denote the subsequence of $n$'s for which this is true by $(n_{k})_{k\geq 0}$. By taking the limit as $k\to\infty$, we find that
\begin{equation*}
    \dnint{v \xi} = \lim_{k\to\infty}\dnint{\frac{v p_{n_{k}}}{q_{n_{k}}}} \leq C(\mathcal{A},\delta) \lim_{k\to\infty} q_{n_k}^{-\delta} = 0.
\end{equation*}
But this implies that $\xi$ is rational, which is a contradiction.  As a result, we conclude that $\delta(\mathcal{A})  = 0$.
\end{proof}

\begin{proposition}\label{prop:monotonicity}
    If $\mathcal{A} \subseteq \mathcal{B} \subseteq \mathbb{N}$, then $\delta(\mathcal{A}) \leq \delta(\mathcal{B})$.
\end{proposition}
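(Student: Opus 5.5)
The plan is to compare the sets of admissible exponents for $\mathcal{A}$ and for $\mathcal{B}$ directly. Write $S(\mathcal{A})$ for the set of real numbers $\delta$ for which there exist $C(\mathcal{A},\delta)$ and $q_{0}(\mathcal{A},\delta)$ as in Definition \ref{def:diophantine_density}. Then $\delta(\mathcal{A}) = \sup S(\mathcal{A})$ for non-empty $\mathcal{A}$. It suffices to show $S(\mathcal{A}) \subseteq S(\mathcal{B})$, because a supremum over a smaller set is no larger.

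The key observation is pointwise. For every coprime pair $(p,q)$, the set $\{v \in \mathcal{A} : v < q\}$ is contained in $\{v \in \mathcal{B} : v < q\}$. A minimum over a larger finite set is no larger, so
\begin{equation*}
    \min_{v\in\mathcal{B},v<q}\dnint{\frac{vp}{q}} \leq \min_{v\in\mathcal{A},v<q}\dnint{\frac{vp}{q}}.
\end{equation*}
Hence if $\delta \in S(\mathcal{A})$ with constants $C(\mathcal{A},\delta)$ and $q_{0}(\mathcal{A},\delta)$, the same constants witness $\delta \in S(\mathcal{B})$. The only technicality is the convention for a minimum over an empty range, which occurs when no $v \in \mathcal{A}$ lies below $q$. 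Since $\mathcal{A}$ is non-empty, this is avoided by enlarging $q_{0}$ beyond $\min\mathcal{A}$. Then both minima range over non-empty sets for all $q \geq q_0$, because $\min \mathcal{B} \leq \min \mathcal{A}$. Taking suprema gives $\delta(\mathcal{A}) \leq \delta(\mathcal{B})$.

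The edge cases come next. If $\mathcal{A} = \varnothing$, then $\delta(\mathcal{A}) = 0$. We then need $\delta(\mathcal{B}) \geq 0$ for every non-empty $\mathcal{B}$. This holds because $\delta = 0$ always belongs to $S(\mathcal{B})$: take $C = 1$, using $\dnint{x} \leq 1/2$, and $q_0 > \min\mathcal{B}$. Since $\delta(\mathcal{A}) \geq 0$ holds in every case, $S(\mathcal{A})$ is non-empty, so the supremum comparison is never over an empty set. If $\mathcal{B} = \varnothing$, then $\mathcal{A} = \varnothing$ and the claim is trivial.

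I do not expect a real obstacle here. The only care needed is the handling of empty minima and the empty set convention described above.
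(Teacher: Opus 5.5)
Your proof is correct and follows the same route as the paper. The pointwise inequality $\min_{v\in\mathcal{B},v<q}\dnint{vp/q} \leq \min_{v\in\mathcal{A},v<q}\dnint{vp/q}$ makes every admissible exponent for $\mathcal{A}$ admissible for $\mathcal{B}$, and taking suprema finishes. Your explicit handling of empty minima and of the convention $\delta(\varnothing)=0$, via $0$ being admissible for every non-empty $\mathcal{B}$, fills in details the paper leaves implicit in its phrase ``for sufficiently large $q$''.
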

\begin{proof}
    Since $\mathcal{A} \subseteq \mathcal{B}$ we see that for sufficiently large $q$,
    \begin{equation*}
        \min_{v\in\mathcal{B},v < q}\dnint{\frac{v p}{q}} \leq \min_{v\in\mathcal{A},v < q}\dnint{\frac{v p}{q}}
    \end{equation*}
    Thus, any admissible $\delta$ for the set $\mathcal{A}$ is also admissible for the set $\mathcal{B}$.  This shows that $\delta(\mathcal{B}) \geq \delta(\mathcal{A})$.
\end{proof}

\begin{proposition}\label{prop:delta_bounds}
    Let $\mathcal{A} \subseteq \mathbb{N}$.  The Diophantine density $\delta(\mathcal{A})$ is bounded by $0 \leq \delta(\mathcal{A}) \leq 1$.
\end{proposition}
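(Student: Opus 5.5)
The plan is to prove the two inequalities separately, using the monotonicity result (Proposition \ref{prop:monotonicity}) and the computation $\delta(\mathbb{N})=1$ (Proposition \ref{prop:n_full_density}).

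For the upper bound, take any non-empty $\mathcal{A}\subseteq\mathbb{N}$. Since $\mathcal{A}\subseteq\mathbb{N}$, Proposition \ref{prop:monotonicity} gives $\delta(\mathcal{A})\leq\delta(\mathbb{N})$, and Proposition \ref{prop:n_full_density} then gives $\delta(\mathcal{A})\leq 1$. For the empty set, $\delta(\varnothing)=0\leq 1$ by definition.

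For the lower bound, the idea is to show directly that $\delta=0$ is always admissible for non-empty $\mathcal{A}$, so the supremum is at least $0$. Set $C(\mathcal{A},0)=1/2$. Since $\dnint{x}\leq 1/2$ for every real $x$, the required inequality
\begin{equation*}
    \min_{v\in\mathcal{A},v<q}\dnint{\frac{vp}{q}}\leq \tfrac{1}{2}=C q^{0}
\end{equation*}
holds as soon as the minimum is over a non-empty index set. So take $q_0 = a_0+1$, where $a_0=\min\mathcal{A}$. Then every $q\geq q_0$ has $a_0<q$, and the inequality holds for all coprime $(p,q)$ with $q\geq q_0$. Hence $0$ is admissible and $\delta(\mathcal{A})\geq 0$; for $\varnothing$ this holds by definition.

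The only real subtlety is this convention about the minimum over an empty range when $q\leq\min\mathcal{A}$, and choosing $q_0$ as above handles it. This also confirms that the admissible set is non-empty, so the supremum in Definition \ref{def:diophantine_density} is well defined. Combined with Propositions \ref{prop:n_full_density}--\ref{prop:monotonicity}, this completes Theorem \ref{thm:dd_is_abstract}.
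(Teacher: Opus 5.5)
Your proof is correct and follows the paper's argument. The lower bound comes from $\dnint{x}\le 1/2$ with $\delta=0$, the upper bound from monotonicity together with $\delta(\mathbb{N})=1$, and your choice $q_0=\min\mathcal{A}+1$ and separate treatment of $\varnothing$ simply make explicit details the paper leaves implicit.
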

\begin{proof}
    First, we note that $\dnint{x} \leq 1/2$ for all $x \in \mathbb{R}$.  Thus, for any pair $(p,q) \in \mathbb{Z} \times \mathbb{N}$ we have
    \begin{equation*}
        \min_{v\in\mathcal{A},v < q}\dnint{\frac{v p}{q}}  \leq \frac{1}{2}  < \frac{1}{q^{0}}.
    \end{equation*}
    As a result, $\delta(\mathcal{A}) \geq 0$.

    On the other hand, by Propositions \ref{prop:n_full_density} and \ref{prop:monotonicity} we know that for any subset $\mathcal{A} \subseteq \mathbb{N}$, $\delta(\mathcal{A}) \leq \delta(\mathbb{N}) = 1$.  Combining the lower and upper bounds completes the proof.
\end{proof}

With that we have shown that the Diophantine density is an abstract density and our proof of Theorem \ref{thm:dd_is_abstract} is complete.

In their paper on abstract densities, Di Nasso and Jin \cite{dinasso2017abstract} present four additional properties which an abstract density may possess.
\begingroup
\renewcommand{\thedefinition}{2a}
\begin{definition}  An abstract density may possess the following properties:

    \begin{enumerate}[itemsep=0.25em, start=4, label=(\roman*)]
    \item \textit{Subadditivity:} $d(\mathcal{A} \cup \mathcal{B}) \leq d(\mathcal{A}) + d(\mathcal{B})$. \label{item:subadditivity}
    \item \textit{Translation invariance:} $d(\mathcal{A} + k) = d(\mathcal{A} - k) = d(\mathcal{A})$ for every $\mathcal{A}\subseteq \mathbb{N}$ and $k \in \mathbb{N}$.\label{item:translation}
    \item \textit{Richness:} For every $x \in [0,1]$, there is an $\mathcal{A} \subseteq \mathbb{N}$ such that $d(\mathcal{A})= x$.
    \item \textit{Atomless-ness:} For every $\mathcal{A} \subseteq \mathbb{N}$ such that $d(\mathcal{A}) > 0$ there is a $\mathcal{B} \subseteq \mathcal{A}$ such that $0 < d(\mathcal{B}) < d(\mathcal{A})$.
\end{enumerate}
Here the right and left translates of $\mathcal{A}$ are defined by 
\begin{equation*}
    \mathcal{A} + k := \{a + k\, : \, a \in \mathcal{A}\} \quad \text{and} \quad \mathcal{A} - k := \{n \in \mathbb{N}\, : \, n+k \in \mathcal{A}\}.
\end{equation*}
\end{definition}
\endgroup

At the end of this section we show that the Diophantine density is not translation invariant, and we establish richness in Section \ref{sect:richness}. Out of the four properties listed above this leaves subadditivity and atomless-ness, which we were not able to establish.  But we present some evidence that it is not subadditive in Section \ref{sect:open_questions}.

In another study of generic densities, Leonetti and Tringali \cite{leonetti2020notions} define an upper density to be a function which satisfies Properties \ref{item:full_density}, \ref{item:monotonicity}, \ref{item:subadditivity} and \ref{item:translation}, plus an additional  property called $(-1)$\textit{-homogeneity}.  

\begingroup
\renewcommand{\thedefinition}{2b}
\begin{definition}  An abstract density may satisfy the following property:
    \begin{enumerate}[itemsep=0.25em, start=8, label=(\roman*)]
    \item $(-1)$\textit{-homogeneity:} $d(k \mathcal{A}) = k^{-1} d(\mathcal{A})$ for all $\mathcal{A} \subseteq \mathbb{N}$ and $k \in \mathbb{N}$. \label{item:dilation}
\end{enumerate}
\end{definition}
\endgroup
Our next result shows that the Diophantine density is not $(-1)$-homogeneous, and therefore cannot be an upper density in the sense of Leonetti and Tringali.
\begin{proposition}\label{prop:multiples_of_n}
    For all integers $m \geq 2$, the set of multiples of $m$ has full Diophantine density. That is, $\delta(m\mathbb{N}) = 1$.
\end{proposition}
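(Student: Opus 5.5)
The plan is to apply Dirichlet's approximation theorem (Theorem \ref{thm:dirichlet}) directly to the real number $\xi = mp/q$. This produces a small multiplier $k$, and then $v = mk$ is a multiple of $m$ satisfying $v < q$. The upper bound $\delta(m\mathbb{N}) \leq 1$ is already given by Proposition \ref{prop:delta_bounds}, so it only remains to show $\delta(m\mathbb{N}) \geq 1$. For that it suffices to find a constant $C = C(m)$ and a threshold $q_0$ such that
\begin{equation*}
    \min_{v\in m\mathbb{N},\, v<q}\dnint{\frac{vp}{q}} \leq \frac{C}{q}
\end{equation*}
for every coprime pair $(p,q)$ with $q \geq q_0$.

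Concretely, fix a coprime pair $(p,q)$ with $q \geq m+1$ and set $K = \floor{(q-1)/m}$, so that $K \geq 1$. Every $k$ with $1 \leq k \leq K$ gives $v = mk \leq q-1 < q$, and
\begin{equation*}
    \dnint{\frac{vp}{q}} = \dnint{k\cdot \frac{mp}{q}}.
\end{equation*}
Apply Theorem \ref{thm:dirichlet} with $\xi = mp/q$ and $N = K$. It gives some $1 \leq k \leq K$ with $\dnint{k\xi} \leq (K+1)^{-1}$. Since $K + 1 > (q-1)/m$, this yields
\begin{equation*}
    \dnint{\frac{mkp}{q}} < \frac{m}{q-1} \leq \frac{2m}{q}
\end{equation*}
for all $q \geq 2$. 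So we may take $C = 2m$ and $q_0 = m+1$, which shows $\delta = 1$ is admissible and hence $\delta(m\mathbb{N}) \geq 1$.

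This route needs no case analysis on $\gcd(m,q)$. An approach that inverts $mp$ modulo $q/\gcd(m,q)$ would instead run into the problem of keeping $mk < q$. Coprimality of $p$ and $q$ is not even used, which makes sense because we only need an upper bound. The only delicate point is the bookkeeping that the range $1 \leq k \leq K$ keeps $v < q$ while $K$ is still of size about $q/m$. I do not expect a genuine obstacle.
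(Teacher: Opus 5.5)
Your proposal is correct and follows essentially the same route as the paper: Dirichlet's theorem applied to $\xi = mp/q$ with $N = \lfloor (q-1)/m \rfloor$, then $v = mk$, then Proposition \ref{prop:delta_bounds} for the upper bound. The only cosmetic difference is that the paper uses the sharper estimate $\lfloor (q-1)/m\rfloor + 1 \geq q/m$, which gives the constant $m$ in place of your $2m$.
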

\begin{proof}
    Let $(p,q) \in \mathbb{Z} \times \mathbb{N}$ be coprime with $q > m$, and let
    \begin{equation*}
        N = \left\lfloor{\frac{q-1}{m}}\right\rfloor.
    \end{equation*}
    Next, let $\xi = mp/q$ in Dirichlet's approximation theorem, so that we have
    \begin{equation*}
        \min_{1\leq n\leq N}\dnint{\frac{nmp}{q}} \leq \frac{1}{\lfloor{N}\rfloor + 1}.
    \end{equation*}
    Setting $v = nm$, we see that by construction, $v \in m\mathbb{N}$ and
    \begin{equation*}
         v \leq m \left\lfloor{\frac{q-1}{m}}\right\rfloor < q.
    \end{equation*}
    Moreover,
    \begin{equation*}
        \lfloor{N}\rfloor + 1 = \left\lfloor{\frac{q-1}{m}}\right\rfloor + 1 \geq \frac{q}{m},
    \end{equation*}
    which implies that
    \begin{equation*}
        \frac{1}{\lfloor{N}\rfloor + 1} \leq \frac{m}{q}.
    \end{equation*}
    Putting this all together we see that
        \begin{equation*}
        \min_{v \in m\mathbb{N}, v < q}\dnint{\frac{vp}{q}} \leq \frac{m}{q} \ll \frac{1}{q}.
    \end{equation*}
    This shows that $\delta(m\mathbb{N}) \geq 1$, and in light of the upper bound derived in the Proposition \ref{prop:delta_bounds}, we have $\delta(m\mathbb{N}) = 1$.
\end{proof}
Proposition \ref{prop:multiples_of_n} provides an example of an interesting phenomenon.  Namely, the Diophantine density admits sets that possess co-infinite subsets of the same density.  We revisit this example in Section \ref{sect:open_questions}, where a related open question is posed.

In the final proposition of this section we show that the Diophantine density is not translation invariant.

\begin{proposition}
    The Diophantine density is not translation invariant.
\end{proposition}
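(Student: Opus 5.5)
We need a set $\mathcal{A}$ and a $k$ with $\delta(\mathcal{A}+k)\neq\delta(\mathcal{A})$ or $\delta(\mathcal{A}-k)\neq\delta(\mathcal{A})$. The natural candidate, suggested by Proposition \ref{prop:multiples_of_n}, is $\mathcal{A}=m\mathbb{N}$ with $m\ge 2$, which has full density, together with a translate $m\mathbb{N}+k$ for $1\le k<m$. The plan is to show that such a translate has Diophantine density $0$.

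The first step is to find, for every large $q$, a numerator $p$ coprime to $q$ for which no element $v=mn+k<q$ makes $\dnint{vp/q}$ small. Take $m=2$ and $k=1$, so $\mathcal{A}+1$ is the set of odd numbers $\ge 3$. Choose $q$ odd and $p=(q-1)/2$, which is coprime to $q$ because $2p\equiv -1 \pmod q$. Then for $v$ odd,
\begin{equation*}
\frac{vp}{q} \equiv -\frac{v}{2q} \pmod 1 \quad\text{in the sense that } 2vp\equiv -v \pmod q,
\end{equation*}
so $vp \equiv (q-v)/2 \pmod q$, using that $q-v$ is even. Hence
\begin{equation*}
\dnint{\frac{vp}{q}}=\dnint{\frac{q-v}{2q}}=\frac{q-v}{2q} \quad \text{for } 0<v<q.
\end{equation*}
This quantity is small only when $v$ is close to $q$. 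That is no contradiction on its own, since $v=q-2$ gives $1/q$.

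This reveals the main obstacle: a single numerator $p$ may fail to be bad. We need $p$ whose multiples by odd $v<q$ all stay away from integers. The better choice is $p$ with $2p\equiv 1\pmod q$, that is $p=(q+1)/2$. Then for odd $v$, $vp\equiv (v+q)/2 \pmod q$, and $(v+q)/2\in\big((q+1)/2,\,q\big)$ for $1\le v<q$. So
\begin{equation*}
\dnint{\frac{vp}{q}} = \frac{q-v}{2q},
\end{equation*}
which is again attained near $v=q-2$. The underlying reason is that small $\dnint{vp/q}$ requires $vp\equiv r$ with $|r|$ small, i.e. $v\equiv r p^{-1}$, and odd residues close to $q$ always realize this. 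So the density of the odd numbers may well be $1$, and I would check this quickly via the Dirichlet argument applied to $2p/q$ with one extra shift.

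I would therefore switch to a translate that destroys the approximations outright, and the cleanest choice is to go the other direction using finite sets. Take $\mathcal{A}=\{1\}\cup m\mathbb{N}$-type examples, or more decisively use a left translate $\mathcal{A}-k$ that becomes finite or empty. Concretely, let $\mathcal{A}=\mathbb{N}$ and note that translation preserves $\mathbb{N}$ up to finitely many points, so that gives nothing. Instead I would choose a sparse, finitely-supported-looking set, or the set of all $v$ divisible by a fixed prime $\ell$ shifted by $k$. For $q\equiv 0 \pmod\ell$, any $v\equiv k\pmod \ell$ with $\ell\nmid k$ gives $vp\not\equiv 0,\pm1,\dots$ only mod $\ell$ constraints, and $\dnint{vp/q}\ge 1/q$. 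So residue-class obstructions only give $1/q$-scale bounds, which is not enough.

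Given these difficulties, my committed plan is the following. Construct $\mathcal{A}=\{n^2\}$, or better a lacunary-free set with $\delta(\mathcal{A})>0$. Then observe that $\mathcal{A}-k$ for $k$ larger than all gaps arguments is not helpful either. The most reliable route is to exhibit an infinite set $\mathcal{A}$ with $\delta(\mathcal{A})=0$, for example $\{2^j\}$, such that some translate contains a set of positive density. This reduces the problem to lower-bounding $\delta$ of a translate. The hardest step, which I would attack first, is to find a pair of a set and a translate for which I can actually compute both densities, likely taking $\mathcal{A}$ to be a union of residue classes whose translate includes $0 \bmod m$, and invoking Proposition \ref{prop:multiples_of_n} together with monotonicity (Proposition \ref{prop:monotonicity}).
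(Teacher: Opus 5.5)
There is a genuine gap. Your proposal never exhibits a set and a translate whose densities you actually compute, and the final ``committed plan'' defers the one step that constitutes the proof. Two intermediate claims also point the wrong way.
\begin{enumerate}
\item From the failure of the odd numbers you conclude that residue-class obstructions ``only give $1/q$-scale bounds.'' This is false; the failure is special to modulus $2$.
\item The suggestion of $\{2^j\}$ cannot work. Every translate of a set with $\mathcal{A}(q)=O(\log q)$ again has counting function $O(\log q)$. By Theorem~\ref{thm:delta_upper_bound} and Proposition~\ref{prop:monotonicity}, every translate, and every subset of a translate, therefore has density $0$.
\end{enumerate}

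The missing idea is already latent in your own computation. Your choice $p=(q-1)/2$ means $p/q=\tfrac12-\tfrac1{2q}$. The formula $\dnint{vp/q}=\tfrac12-\tfrac{v}{2q}$ shows that odd $v$ land near half-integers, but with a drift $v/(2q)$ that reaches $\tfrac12$ as $v\to q$. That drift is why the odd numbers escape; their density is in fact $1$.

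The fix is to replace $2$ by $2m$ with $m\ge 2$:
\begin{enumerate}
\item Take $p_n=n$ and $q_n=2mn+1$. Then $\gcd(p_n,q_n)=1$ and $p_n/q_n=\tfrac1{2m}-\tfrac1{2mq_n}$.
\item For $v=2mu+m<q_n$ one gets $vp_n/q_n=u+\tfrac12-\tfrac{v}{2mq_n}$.
\item Now the drift is less than $\tfrac1{2m}\le\tfrac14$, so $\dnint{vp_n/q_n}>\tfrac14$ for every admissible $v$ and every $n$.
\item Since $q_n\to\infty$, no positive exponent is admissible, so $\delta(2m\mathbb{N}+m)=0$.
\item Meanwhile $\delta(2m\mathbb{N})=1$ by Proposition~\ref{prop:multiples_of_n}.
\end{enumerate}
In your left-translate formulation, this is the pair $\mathcal{A}=2m\mathbb{N}+m$ and $\mathcal{A}-m=2m\mathbb{N}$. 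This is exactly the paper's argument.

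The conceptual point you missed is how to read a residue-class obstruction. It should not come from divisibility conditions on $q$. It comes from choosing $p/q$ very close to a rational $a/b$ with small denominator, so that $vp/q\approx va/b$ stays bounded away from integers on a suitable class of $v$. Without this step, your proposal has no witness to non-invariance.
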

\begin{proof}
    Let $m \geq 2$ be a positive integer and consider the set $2m\mathbb{N}$.  By Proposition \ref{prop:multiples_of_n} we know that $\delta(2m\mathbb{N}) = 1$.  We will demonstrate that a right translation by $m$ results in $\delta(2m\mathbb{N} + m) = 0$. 

    To see this, define $p_{n} = n$ and $q_{n} = 2mn+1$, so that
    \begin{equation*}
        \frac{p_{n}}{q_{n}} = \frac{n}{2mn+1} = \frac{1}{2m} - \frac{1}{2mq_{n}}.
    \end{equation*}
    For any $v \in 2m\mathbb{N} + m$, $v$ is of the form $2mu+m$, and therefore
    \begin{equation*}
        \frac{vp_{n}}{q_{n}} = \frac{2mu+m}{2m} - \frac{v}{2mq_{n}} = u + \frac{1}{2} - \frac{v}{2mq_{n}}.
    \end{equation*}
    Thus, for any such $v < q_{n}$ the distance to the nearest integer is given by
    \begin{equation*}
        \dnint{\frac{vp_{n}}{q_{n}}} = \frac{1}{2} - \frac{v}{2mq_{n}} > \frac{1}{2} - \frac{1}{2m} \geq \frac{1}{4}.
    \end{equation*}
    As there are infinitely many such rationals $p_{n}/q_{n}$, this rules out any positive exponent, thereby establishing that $\delta(2m\mathbb{N} + m) = 0$.
\end{proof}

As far as the author knows, the concept of Diophantine density has not been directly studied before, but related concepts have appeared in the literature.  One related idea that has appeared in the literature is that of a Heilbronn set.  A set $\mathcal{A} \subseteq \mathbb{N}$ is said to be Heilbronn if, for every real number $\xi$ and $\varepsilon > 0$, there exists a $v \in \mathcal{A}$ such that $\dnint{v \xi} < \varepsilon$.

Recently, Iyer \cite{iyer2025rational} defined two characterizations of sets that are more closely related to our notion of Diophantine density. Firstly, a set $\mathcal{A} \subseteq \mathbb{N}$ is said to be an $\mathcal{H}$-set, if there exists a function $f(x)$ decreasing to $0$, such that for every $\xi \in \mathbb{R}$ and $N \in \mathbb{N}$, one has
\begin{equation*}
    \min_{v \in \mathcal{A}, v \leq N}\dnint{v \xi} \leq f(N).
\end{equation*}

Iyer also defines a set $\mathcal{A} \subseteq \mathbb{N}$ to be a \textit{strong approximating} set, if for all $\theta < 1$ and $\xi \in \mathbb{R}$ one has
\begin{equation*}
    \min_{v \in \mathcal{A}, v \leq N}\dnint{v \xi} \leq C_{\theta}N^{-\theta}.
\end{equation*}
where $C_{\theta}$ is a constant depending only on $\theta$.

If $\mathcal{A}$ is a strong approximating set, then by taking $\xi = p/q$ and $N = q-1$ in the definition, we see that 
\begin{equation*}
    \min_{v \in \mathcal{A}, v < q}\dnint{\frac{vp}{q}} \leq C_{\theta}(q-1)^{-\theta} \ll_{\theta} q^{-\theta}
\end{equation*}
for all $\theta < 1$ and rational $p/q$.  This implies that any strong approximating set has a Diophantine density of $1$.

\section{Bounding the Diophantine density of a set}\label{sect:delta_bounds}
The goal of this section to prove two theorems which provide an upper and lower bound for the Diophantine density of a given set and its complement, respectively.  We begin by stating the following definition for the counting function of a subset of the natural numbers.

\begin{definition}
    Let $\mathcal{A} \subseteq \mathbb{N}$ and for $n \in \mathbb{N}$ let
    \begin{equation*}
        \mathcal{A}(n) := \# \{a \in \mathcal{A} \, : \, a < n\} 
    \end{equation*}
    denote the number of elements of $\mathcal{A}$ less than $n$.
\end{definition}

\begin{remark}
    We would like to point out that our convention for the counting function $\mathcal{A}(n)$ differs from the usual definition, in which $\mathcal{A}(n)$ is defined as the number of elements less than \textit{or equal to} $n$.
\end{remark}

The first goal of this section is to establish a lower bound for the density of a complement $\mathbb{N} \,\backslash \, \mathcal{A}$.  To accomplish this, we need the following lemma.

\begin{lemma}\label{lem:complement_bound}
Let $\mathcal{A} \subseteq \mathbb{N}$. Then for any coprime pair $(p,q) \in \mathbb{Z} \times \mathbb{N}$, 
\begin{equation*}
\min_{v\in \mathbb{N} \, \backslash\, \mathcal{A}, v < q}\dnint{\frac{v p}{q}} \leq \frac{\mathcal{A}(q)+1}{q}.
\end{equation*}
\end{lemma}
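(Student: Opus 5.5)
The idea is the same as in Proposition \ref{prop:n_full_density}: multiplication by $p$ permutes the residues modulo $q$, so the values $\dnint{vp/q}$ for $0 < v < q$ run through all the numbers $\dnint{j/q}$ with $1 \leq j \leq q-1$. Each value $k/q$ with $1 \leq k < q/2$ is hit exactly twice (by $j=k$ and $j=q-k$). Hence only a few residues are small, and if $\mathcal{A}$ is small below $q$ it cannot contain all of them. Throughout we may assume $\mathcal{A}(q)+1 < q/2$, since otherwise the right-hand side is at least $1/2 \geq \dnint{x}$. We also note that $0 \notin \mathbb{N}$, so this case already covers $q=1$, where the minimum is over an empty set.

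The key steps, in order, are as follows.

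First, let $p^{-1}$ denote the inverse of $p$ modulo $q$. For each $k$ with $1 \leq k \leq \mathcal{A}(q)+1$, define $v_k \in \{1,\dots,q-1\}$ by $v_k \equiv k p^{-1} \pmod q$. Then $v_k p \equiv k \pmod q$, and so
\begin{equation*}
\dnint{\frac{v_k p}{q}} = \frac{k}{q}
\end{equation*}
because $k < q/2$. The $v_k$ are distinct, since $k \mapsto kp^{-1}$ is injective modulo $q$ and distinct $k$ in this range are distinct residues. We also have $v_k \neq 0$ because $k \not\equiv 0$.

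Second, we count. There are $\mathcal{A}(q)+1$ distinct integers $v_k$, all lying in $[1,q)$. At most $\mathcal{A}(q)$ of them can lie in $\mathcal{A}$, since $\mathcal{A}(q)$ counts exactly the elements of $\mathcal{A}$ below $q$. By pigeonhole, some $v_k$ with $k \leq \mathcal{A}(q)+1$ belongs to $\mathbb{N}\setminus\mathcal{A}$ and satisfies $v_k<q$. This gives
\begin{equation*}
\min_{v\in \mathbb{N}\setminus\mathcal{A},\, v<q}\dnint{\frac{vp}{q}} \leq \frac{k}{q} \leq \frac{\mathcal{A}(q)+1}{q}.
\end{equation*}

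The argument is essentially routine. The only point needing care is the edge bookkeeping: ensuring $k<q/2$, so that the nearest-integer distance is exactly $k/q$, and disposing of the degenerate case where $\mathcal{A}(q)+1 \geq q/2$ by the trivial bound. One could even use both residues $\pm k$ to obtain roughly $(\mathcal{A}(q)+1)/(2q)$, but the stated bound does not require this.
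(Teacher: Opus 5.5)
Your proof is correct and is essentially the paper's argument. Both rest on the fact that multiplication by $p$ permutes the nonzero residues modulo $q$, so each value $k/q$ with $1\le k\le \mathcal{A}(q)+1$ is attained as $\dnint{vp/q}$ by some $v<q$, and the $\mathcal{A}(q)$ elements of $\mathcal{A}$ below $q$ cannot block all of these; you make the pigeonhole explicit through $v_k\equiv kp^{-1}$ modulo $q$, where the paper phrases it with multisets.

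One small caveat, which the paper shares: your trivial case (right-hand side at least $1/2\ge\dnint{x}$) needs some $v<q$ outside $\mathcal{A}$ to exist. When $\{1,\dots,q-1\}\subseteq\mathcal{A}$ (including $q=1$), the minimum is over an empty set, so the statement only makes sense under a convention rather than being ``covered'' by that case as you assert.
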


\begin{proof}
Fix a coprime $(p,q) \in \mathbb{Z} \times \mathbb{N}$.  Our goal is to bound the smallest element left in the multiset
\begin{equation*}
    D(p,q):= \left\{\dnint{\frac{k p}{q}}\,:\,  1 \leq k < q \right\}
\end{equation*}
after removing those elements for which $k \in \mathcal{A}$.

We begin by noting that since $p$ permutes the nonzero residue classes modulo $q$ and
\begin{equation*}
    \dnint{\frac{kp}{q}} = \dnint{\frac{kp \mod q}{q}}
\end{equation*}
we see that 
\begin{equation*}
    D(p,q)= \left\{\dnint{\frac{k}{q}}\,:\,  1 \leq k < q \right\}
\end{equation*}
as multisets.  Moreover, for $k < q$ we have
\begin{equation*}
    \dnint{\frac{k}{q}} = \frac{1}{q}\min\{k,q-k\},
\end{equation*}
and thus
\begin{equation*}
    D(p,q)= \left\{\frac{1}{q},\frac{1}{q},\frac{2}{q},\frac{2}{q},\dots,\frac{\lfloor{q/2}\rfloor}{q}\right\},
\end{equation*}
where each distinct element occurs twice, except for the last which occurs only once for even $q$.

Now consider what happens when we remove the elements of $D(p,q)$ which correspond to the $\mathcal{A}(q)$ elements of $\mathcal{A}$ less than $q$.  Since each distinct element of $D(p,q)$ occurs at least once, the smallest remaining element may be roughly bounded by 
\begin{equation*}
\min_{v\in\mathbb{N} \,\backslash \,\mathcal{A}, v < q}\dnint{\frac{v p}{q}} \leq \frac{\mathcal{A}(q)+1}{q}.
\end{equation*}
With that our proof is complete.
\end{proof}

\begin{theorem}\label{thm:delta_lower_bound}
    Let $\mathcal{A} \subseteq \mathbb{N}$ be non-empty.  Then $\delta(\mathbb{N} \,\backslash \,\mathcal{A})$ is bounded below by
\begin{equation*}
    \delta(\mathbb{N} \,\backslash \,\mathcal{A}) \geq 1-\limsup_{q\to\infty}\frac{\log \mathcal{A}(q)}{\log q}
\end{equation*}
\end{theorem}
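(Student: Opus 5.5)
Set $\alpha := \limsup_{q\to\infty} \log \mathcal{A}(q)/\log q$. The plan is to feed Lemma \ref{lem:complement_bound} directly into Definition \ref{def:diophantine_density}. Since $\mathcal{A}$ is non-empty, $\mathcal{A}(q) \geq 1$ once $q$ exceeds the least element of $\mathcal{A}$, so the logarithm is defined for large $q$. Moreover $\mathcal{A}(q) \leq q-1$, hence $0 \leq \alpha \leq 1$.

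If $\alpha = 1$, the claimed bound reads $\delta(\mathbb{N}\setminus\mathcal{A}) \geq 0$, which is Proposition \ref{prop:delta_bounds}. So assume $\alpha < 1$.

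Fix any $\delta < 1-\alpha$ and choose $\varepsilon > 0$ with $\delta = 1-\alpha-\varepsilon$. By the definition of $\limsup$ there is $q_0$ such that $\mathcal{A}(q) \leq q^{\alpha+\varepsilon}$ for all $q \geq q_0$. Lemma \ref{lem:complement_bound} then gives, for every coprime $(p,q)$ with $q \geq q_0$,
\begin{equation*}
\min_{v\in \mathbb{N}\setminus\mathcal{A},\, v<q}\dnint{\frac{vp}{q}} \leq \frac{q^{\alpha+\varepsilon}+1}{q} \leq 2q^{\alpha+\varepsilon-1} = 2q^{-\delta}.
\end{equation*}
The second inequality uses $q^{\alpha+\varepsilon} \geq 1$. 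This shows that $\delta$ is admissible for $\mathbb{N}\setminus\mathcal{A}$, with $C = 2$.

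Taking the supremum over all such $\delta$ yields $\delta(\mathbb{N}\setminus\mathcal{A}) \geq 1-\alpha$.

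Two minor points need checking. First, the minimum must range over a non-empty set, so $\mathbb{N}\setminus\mathcal{A}$ has to contain some element below $q$. This holds because $\alpha < 1$ forces $\mathcal{A}(q) < q-1$ for large $q$. In any case Lemma \ref{lem:complement_bound} already handles this implicitly. Second, the $\varepsilon$-slack is genuinely needed, because the $\limsup$ need not be attained. Neither point is a real obstacle: the argument is a direct unpacking of the lemma.
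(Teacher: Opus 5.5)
Your proof is correct and is essentially the paper's argument: both handle the trivial case with Proposition \ref{prop:delta_bounds}, then feed Lemma \ref{lem:complement_bound} into the definition of Diophantine density. The only difference is that the paper detours through $m(\mathcal{A}) = \inf\{\tau : \mathcal{A}(q) \ll q^{\tau}\}$ and proves that $m(\mathcal{A})$ equals the $\limsup$. You instead use the $\varepsilon$-characterization of the $\limsup$ directly, which is exactly the half of that equality the bound needs, and you also check explicitly that the minimum is over a non-empty set, a point the paper leaves implicit.
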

\begin{proof}
Let us begin by defining  
\begin{equation*}
    m(\mathcal{A}) := \inf\{ \tau : \mathcal{A}(q) \ll q^{\tau}\}.
\end{equation*}
Since $\mathcal{A}$ is non-empty we have $0 \leq m(\mathcal{A}) \leq 1$.  If $m(\mathcal{A}) = 1$, then the lower bound $ \delta(\mathbb{N} \,\backslash \,\mathcal{A}) \geq 0$ follows by Proposition \ref{prop:delta_bounds}.  Therefore, let us assume that $m(\mathcal{A}) < 1$.  By Lemma \ref{lem:complement_bound}, we see that for any coprime pair $(p,q) \in \mathbb{Z} \times \mathbb{N}$, and $\tau > m(\mathcal{A})$,
\begin{align*}
\min_{v\in\mathbb{N} \,\backslash \,\mathcal{A}, v < q}\dnint{\frac{v p}{q}} \leq \frac{\mathcal{A}(q)+1}{q} \ll \frac{q^{\tau}+1}{q} \ll q^{-1+\tau}.
\end{align*}
Consequently, $\delta(\mathbb{N} \,\backslash \,\mathcal{A}) \geq 1- \tau$ for any $\tau > m(\mathcal{A})$.  This implies that
\begin{equation*}
    \delta(\mathbb{N} \,\backslash \,\mathcal{A}) \geq 1- m(\mathcal{A}).
\end{equation*}
    Our goal now is to show that
\begin{equation*}
    m(\mathcal{A}) = \limsup_{q\to \infty}\frac{\log \mathcal{A}(q)}{\log q}.
\end{equation*}
To achieve this, let $L$ denote the limit supremum in the previous line.  Then we see that for large enough $q$,
\begin{equation*}
    \frac{\log \mathcal{A}(q)}{\log q} \leq L + \varepsilon
\end{equation*}
for every $\varepsilon > 0$.  Thus, $\mathcal{A}(q) \leq q^{L+\varepsilon}$ and consequently, $m(\mathcal{A}) \leq L+\varepsilon$.  Since this holds for every $\varepsilon$ we obtain $m(\mathcal{A}) \leq L$.

For the reverse inequality, suppose that $\mathcal{A}(q) \ll q^\tau$. Then there exists a constant $C > 0$ such that, for all sufficiently large $q$,
\begin{equation*}
    \frac{\log \mathcal{A}(q)}{\log q}
    \leq \tau + \frac{\log C}{\log q}.
\end{equation*}
Taking the limit supremum gives $L \leq \tau$.  Since this holds for every $\tau$ for which $\mathcal{A}(q) \ll q^\tau$, we obtain $L \leq m(\mathcal{A})$.

Combining the upper and lower bounds for $m(\mathcal{A})$, we obtain $m(\mathcal{A}) = L$, which completes the proof.
\end{proof}

\begin{theorem}\label{thm:delta_upper_bound}
    Let $\mathcal{A} \subseteq \mathbb{N}$ be non-empty. Then $\delta(\mathcal{A})$ is bounded above by
\begin{equation*}
    \delta(\mathcal{A}) \leq \liminf_{q \to \infty}\frac{\log \mathcal{A}(q)}{\log q}.
\end{equation*}
\end{theorem}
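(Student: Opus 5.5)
We would argue by a counting/pigeonhole argument over the fractions $p/q$ with $q$ fixed. Let $L = \liminf_{q\to\infty} \log\mathcal{A}(q)/\log q$. If $L = 1$ there is nothing to prove by Proposition \ref{prop:delta_bounds}, so assume $L<1$ and suppose, for contradiction, that some admissible $\delta > L$ exists, with constants $C = C(\mathcal{A},\delta)$ and $q_0$. Fix $\varepsilon>0$ with $L+\varepsilon < \delta$. By definition of the limit inferior there are infinitely many $q$ with $\mathcal{A}(q) \leq q^{L+\varepsilon}$. For such a $q$ we will pass to a nearby prime, or simply handle general $q$ by restricting to $p$ coprime to $q$. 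The simplest route is to take $q$ prime, but the liminf only gives us arbitrary $q$. So we work with all $p \in \{1,\dots,q-1\}$ coprime to $q$, of which there are $\varphi(q) \gg q/\log\log q$.

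The key step is a double count. For each $v\in\mathcal{A}$ with $v<q$, consider the set of $p \pmod q$, $\gcd(p,q)=1$, with $\dnint{vp/q} \leq Cq^{-\delta}$. This means $vp \equiv r \pmod q$ for some integer $r$ with $|r| \leq Cq^{1-\delta}$. For fixed $r$, the congruence $vp\equiv r$ has at most $\gcd(v,q)$ solutions $p$ mod $q$. This is the delicate point, since $\gcd(v,q)$ may be large. To avoid it, we count more carefully. Since $p$ is a unit, $vp \equiv r$ forces $\gcd(v,q)=\gcd(r,q)$, and the map $p\mapsto vp$ restricted to units hits each admissible residue $r$ exactly $\varphi(q)/\varphi(q/g)$ times, where $g=\gcd(v,q)$. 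Hence the number of good $p$ for a given $v$ is at most
\begin{equation*}
\#\{r : |r|\leq Cq^{1-\delta},\ g\mid r\}\cdot \frac{\varphi(q)}{\varphi(q/g)} \ll \left(\frac{Cq^{1-\delta}}{g}+1\right)\frac{\varphi(q)}{\varphi(q/g)}.
\end{equation*}
Because $\varphi(q)/\varphi(q/g)\leq g$, this is $\ll Cq^{1-\delta}+g$. The term $g$ is still troublesome. The cleanest fix is to choose $q$ prime, so that $g=1$, and I expect this to be the main obstacle. The liminf gives good $q$ only along a sequence. However, $\mathcal{A}(q)$ is monotone, so for a good $q$ we can pick a prime $q'\in(q/2,q]$ by Bertrand's postulate. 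Then $\mathcal{A}(q')\leq\mathcal{A}(q)\leq q^{L+\varepsilon}\leq (2q')^{L+\varepsilon}$, so $q'$ is also good up to a constant factor.

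With $q$ prime and $q\geq q_0$, every unit $p$ must be covered by some $v\in\mathcal{A}$, $v<q$, by admissibility of $\delta$. Summing the bound above over such $v$ gives
\begin{equation*}
q-1 \leq \mathcal{A}(q)\,\bigl(2Cq^{1-\delta}+1\bigr) \ll q^{L+\varepsilon}\, q^{1-\delta} + q^{L+\varepsilon}.
\end{equation*}
Since $L+\varepsilon<\delta\leq 1$, the right side is $o(q)$, which is a contradiction for large prime $q$ in our sequence. Hence no $\delta>L$ is admissible, and $\delta(\mathcal{A})\leq L$. The case $\varepsilon$ arbitrary gives the stated bound.
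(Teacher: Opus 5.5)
Your proof is correct and is essentially the paper's argument: for prime $q$, each $v<q$ covers at most $2Cq^{1-\delta}$ unit residues $p$, so covering all $q-1$ units forces $\mathcal{A}(q)\gg q^{\delta}$, and Bertrand's postulate together with the monotonicity of $\mathcal{A}(\cdot)$ transfers this between primes and general $q$. The difference is only one of form: you argue by contradiction and apply Bertrand at a $q$ from the liminf sequence, whereas the paper proves $\mathcal{A}(n)\geq c\,n^{\delta}$ for all large $n$ directly and then takes the liminf.
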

\begin{proof}
The first step in our proof is to show that for every $0 \leq \delta < \delta(\mathcal{A})$ there exists a $c > 0$ such that $\mathcal{A}(q) \geq cq^{\delta}$ for all $q$ sufficiently large.  To accomplish this task, we will essentially examine the set of numerator classes modulo $q$, which are covered by the elements of $\mathcal{A}$ less than $q$.  

For fixed $v$ and $\delta$, define the set of numerator classes modulo $q$ by
\begin{equation*}
    P_{\delta}(v,q) := \left\{p \in (\mathbb{Z}/q\mathbb{Z})^{\times} \, :\, \dnint{\frac{vp}{q}} \leq  Cq^{-\delta}\right\},
\end{equation*}
then the definition of Diophantine density gives
\begin{equation*}
    (\mathbb{Z}/q\mathbb{Z})^{\times} = \bigcup_{\substack{ v \in \mathcal{A} \\ v < q}}P_{\delta}(v,q).
\end{equation*}

Now let us suppose that $q$ is prime.  Because $v < q$ and consequently $\gcd(v,q) = 1$, the map $p \mapsto vp\,(\mathrm{mod}\, q)$ is a bijection.  Therefore, the number of elements in $P_{\delta}(v,q)$ is equal to the nonzero residues $r$ modulo $q$ which satisfy $\dnint{r/q} \leq  Cq^{-\delta}$.  In other words,
\begin{equation*}
    \left\vert P_{\delta}(v,q) \right\vert =  \#\left\{1\leq r \leq q-1 \, :\, \dnint{\frac{r}{q}} \leq  Cq^{-\delta}\right\}.
\end{equation*}

This fact allows us to easily bound the order of $P_{\delta}(v,q)$.  To this end, observe that if $r$ is such that $\dnint{r/q} \leq  Cq^{-\delta}$, then by definition
    \begin{equation*}
        \dnint{\frac{r}{q}} = \frac{1}{q}\min\{r,q-r\} \leq Cq^{-\delta}.
    \end{equation*}
    This implies that either 
    \begin{equation*}
        r \leq Cq^{1-\delta} \quad \text{or} \quad q-r \leq  Cq^{1-\delta}
    \end{equation*}
    and thus, $r$ must satisfy
    \begin{equation*}
        r \in [1, Cq^{1-\delta}] \cup [q - Cq^{1-\delta},q-1].
    \end{equation*}
    So for fixed $v$ there are at most $2 Cq^{1-\delta}$ such numerator classes that $v$ can cover.  That is, $\left\vert P_{\delta}(v,q)\right\vert \leq 2 Cq^{1-\delta}$.
    
    By taking the union of all $v \in \mathcal{A}$ such that $v < q$, we find that
    \begin{equation*}
    q-1 \leq \sum_{\substack{ v \in \mathcal{A} \\ v < q}}\vert P_{\delta}(v,q) \vert \leq 2 C \mathcal{A}(q) q^{1-\delta} .
\end{equation*}
    Consolidating constants, this implies that for some $c > 0$,
    \begin{equation*}
        \mathcal{A}(q) \geq  c q^{\delta},
    \end{equation*}
    for all sufficiently large prime $q$.

Next, by Bertrand's postulate, for every sufficiently large integer
$n$, there exists a prime $q$ such that $n/2 \leq q \leq n$.
Since the counting function is non-decreasing and $\delta \geq 0$,
\begin{equation*}
    \mathcal{A}(n) \geq \mathcal{A}(q)
    \geq c q^\delta
    \geq c 2^{-\delta} n^\delta.
\end{equation*}

Thus, after adjusting the constant, the bound
$\mathcal{A}(q) \geq c q^\delta$ holds for all sufficiently large
integers $q$, not only primes.
   
    Finally, by taking logarithms, we obtain
    \begin{equation*}
        \frac{\log \mathcal{A}(q)}{\log q} \geq \delta + \frac{\log c}{\log q}.
    \end{equation*}
    Therefore,
    \begin{equation*}
        \delta \leq \liminf_{q \to \infty}\frac{\log \mathcal{A}(q)}{\log q}. 
    \end{equation*}
    Since this holds for every $0 \leq \delta < \delta(\mathcal{A})$ we are able to say that
    \begin{equation*}
        \delta(\mathcal{A}) \leq \liminf_{q \to \infty}\frac{\log \mathcal{A}(q)}{\log q}, 
    \end{equation*}
    as we had sought to show.
\end{proof}

\begin{remark}
By combining Theorems \ref{thm:delta_lower_bound} and \ref{thm:delta_upper_bound} we obtain the bounds
\begin{equation*}
    1- \limsup_{q\to\infty}\frac{\log \mathcal{A}^{c}(q)}{\log q}\leq \delta(\mathcal{A}) \leq \liminf_{q \to \infty}\frac{\log \mathcal{A}(q)}{\log q},
\end{equation*}
where we have set $\mathcal{A}^{c} := \mathbb{N}\,\backslash\,\mathcal{A}$.  Unfortunately, this statement cannot produce a non-trivial value for both the lower and upper bound simultaneously.  That is, a lower bound that is greater than $0$ and an upper bound that is less than $1$.

    To see this, we first note that $\mathcal{A}^{c}(q) + \mathcal{A}(q) = q-1$.  Then suppose that we have a non-trivial lower bound, so that
    \begin{equation*}
        \limsup_{q\to\infty}\frac{\log \mathcal{A}^{c}(q)}{\log q} < 1.
    \end{equation*}
    It must be the case that $\mathcal{A}^{c}(q) = o(q)$ as $q\to\infty$, and therefore that $\mathcal{A}(q) = q-1-o(q)$.  Plugging this in to the upper bound we see that
        \begin{equation*}
        \liminf_{q \to \infty}\frac{\log \mathcal{A}(q)}{\log q} = 1.
    \end{equation*}
    Similarly, when the upper bound is less than $1$, the lower bound is seen to be $0$ by a similar argument.
\end{remark}

\begin{remark}
    The quantities
    \begin{equation*}
        \underline{\varepsilon}(\mathcal{A}) := \liminf_{q \to \infty}\frac{\log \mathcal{A}(q)}{\log q} \quad \text{and} \quad \overline{\varepsilon}(\mathcal{A}) := \limsup_{q \to \infty}\frac{\log \mathcal{A}(q)}{\log q}
    \end{equation*}
    are often called the \textit{lower} and \textit{upper logarithmic densities} of the set $\mathcal{A}$ (see for instance \cite{grekos2012gaps}).  Thus, the bound for $\delta(\mathcal{A})$ given in the previous remark may be written succinctly as
    \begin{equation*}
        1 - \overline{\varepsilon}(\mathcal{A}^{c}) \leq \delta(\mathcal{A}) \leq \underline{\varepsilon}(\mathcal{A}).
    \end{equation*}

    We point out in passing that upper logarithmic density is connected to the theory of Dirichlet series via the identity
    \begin{equation*}
        \overline{\varepsilon}(\mathcal{A}) = \inf\left\{\nu > 0 \, :\, \sum_{a \in \mathcal{A}}a^{-\nu} < \infty \right\}.
    \end{equation*}
    Here the quantity on the right is referred to as the \textit{abscissa of convergence}.  A proof of a more general statement can be found in Apostol's book \cite{apostol2012modular}.
\end{remark}

\section{Constructing a set with prescribed density}\label{sect:richness}
So far we have seen examples of sets which have a Diophantine density of 0 as well as sets which have a Diophantine density of 1.  In this section, we present a construction that shows that for any $x \in (0,1)$, there exists a set whose Diophantine density is $x$.  In other words, using the language of Section \ref{sect:diophantine_density}, we show that the Diophantine density is rich.
\begin{theorem}\label{thm:set_construction}
    For any $x \in (0,1)$, there exists a set $\mathcal{A}_{x}$ such that $\delta(\mathcal{A}_{x}) = x$.
\end{theorem}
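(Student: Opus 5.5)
The plan is to build $\mathcal{A}_{x}$ as a union of dyadic blocks. Each block is a short arithmetic progression of multiples of a large step. Dirichlet's theorem (Theorem \ref{thm:dirichlet}), applied to a dilated rational as in the proof of Proposition \ref{prop:multiples_of_n}, gives the lower bound $\delta(\mathcal{A}_{x}) \geq x$. Theorem \ref{thm:delta_upper_bound} together with a count of the blocks gives the upper bound $\delta(\mathcal{A}_{x}) \leq x$.

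\textbf{Construction.} For each $k \geq 1$ put $T_{k} := \lceil 2^{k(1-x)} \rceil$ and $N_{k} := \lfloor (2^{k}-1)/T_{k} \rfloor$. Then $N_{k} \asymp 2^{kx}$ and $N_{k}T_{k} < 2^{k}$. Define
\begin{equation*}
    \mathcal{A}_{x} := \bigcup_{k \geq 1} \{ nT_{k} \, : \, 1 \leq n \leq N_{k} \}.
\end{equation*}

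\textbf{Lower bound.} Fix a coprime pair $(p,q)$ with $q$ large, and choose $k$ with $2^{k} \leq q < 2^{k+1}$. Apply Theorem \ref{thm:dirichlet} with $\xi = T_{k}p/q$ and $N = N_{k}$. This yields some $1 \leq n \leq N_{k}$ with
\begin{equation*}
    \dnint{\frac{nT_{k}p}{q}} \leq \frac{1}{N_{k}+1} \ll 2^{-kx} \ll q^{-x}.
\end{equation*}
The element $v = nT_{k}$ lies in $\mathcal{A}_{x}$, and it satisfies $v \leq N_{k}T_{k} < 2^{k} \leq q$. Hence the minimum in Definition \ref{def:diophantine_density} is $\ll q^{-x}$ for all large $q$, with a constant depending only on $x$. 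This gives $\delta(\mathcal{A}_{x}) \geq x$.

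\textbf{Upper bound.} Every element of the $k$-th block is less than $2^{k}$. So for $2^{k} \leq q < 2^{k+1}$ we get
\begin{equation*}
    \mathcal{A}_{x}(q) \leq \sum_{j \leq k+1} N_{j} \ll \sum_{j \leq k+1} 2^{jx} \ll_{x} 2^{kx} \ll q^{x},
\end{equation*}
where the geometric sum uses $x > 0$. Therefore $\liminf_{q\to\infty} \log \mathcal{A}_{x}(q)/\log q \leq x$, and Theorem \ref{thm:delta_upper_bound} gives $\delta(\mathcal{A}_{x}) \leq x$. Combining the two bounds yields $\delta(\mathcal{A}_{x}) = x$.

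\textbf{Main obstacle.} The step needing the most care is the bookkeeping that makes the two exponents match exactly. The step size $T_{k}$ must be large enough that the chosen $v$ stays below $q$. At the same time it must be small enough that $N_{k}$ still has exact order $2^{kx}$, so the Dirichlet error is $\ll q^{-x}$ with a uniform constant and no $\varepsilon$-loss. With the choices above, $T_{k} \leq 2^{k(1-x)} + 1$, so $N_{k} \geq 2^{kx}/2$ for large $k$. Both requirements therefore hold, and overlaps between blocks only decrease the count in the upper bound.
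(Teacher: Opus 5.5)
Your construction and lower bound match the paper's. The paper fixes $N_k=\lfloor 2^{xk}\rfloor$ and takes the step $d_k=\lfloor 2^k/N_k\rfloor$; you fix the step $T_k$ and derive $N_k$. The Dirichlet argument is the same and is correct for large $k$. The gap is in the upper bound.

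\textbf{The gap.} The inequality
\[
\mathcal{A}_x(q)\le\sum_{j\le k+1}N_j \qquad (2^k\le q<2^{k+1})
\]
does not follow from ``every element of the $k$-th block is less than $2^k$''. That fact is an \emph{upper} bound on the elements of a block. To discard the blocks with $j>k+1$ you would need a \emph{lower} bound, namely that those blocks have no elements below $q$. This is false.

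The $j$-th block starts at $T_j\approx 2^{j(1-x)}$, which is below $q$ whenever $j<\log_2 q/(1-x)$. So roughly $\frac{x}{1-x}\log_2 q$ further blocks contribute elements below $q$. Their contribution is of the same order as the terms you kept. For instance, block $k+2$ alone has about $q/T_{k+2}\asymp q\,2^{-(k+2)(1-x)}\asymp q^{x}$ elements below $q$. So this is not a matter of constants: some argument is needed even for the $\ll$ version. A telltale sign is that your upper-bound argument never uses $x<1$.

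\textbf{The fix.} This is what the paper does: split the blocks into a head bounded by block sizes and a tail bounded by spacing.
\begin{itemize}
\item For $j\le k+1$, bound block $j$'s contribution by $N_j$, as you do.
\item For $j>k+1$, use the spacing: block $j$ has at most $q/T_j\le q\,2^{-j(1-x)}$ elements below $q$.
\end{itemize}
Since $x<1$, the tail is geometric. Using $q<2^{k+1}$,
\[
\sum_{j>k+1}\frac{q}{T_j}\le q\sum_{j>k+1}2^{-j(1-x)}\ll_x q\,2^{-k(1-x)}\ll_x q^{x}.
\]
Adding this to your head estimate gives $\mathcal{A}_x(q)\ll_x q^x$. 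Theorem~\ref{thm:delta_upper_bound} then yields $\delta(\mathcal{A}_x)\le x$ as you intended.
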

\begin{proof}
    Define the integer sequences $(N_{k})_{k \geq 0}$ and $(d_{k})_{k \geq 0}$ as
    \begin{equation*}
        N_{k} := \lfloor{2^{xk}}\rfloor \quad \text{and} \quad d_{k} := \left\lfloor{\frac{2^{k}}{N_{k}}}\right\rfloor.
    \end{equation*}
    Then define the set
    \begin{equation*}
        \mathcal{A}_{x,k} := \{d_{k},2d_{k},\dots,N_{k}d_{k}\}
    \end{equation*}
    and the union
    \begin{equation*}
        \mathcal{A}_{x} := \bigcup_{k=0}^{\infty}\mathcal{A}_{x,k}.
    \end{equation*}
    Next, let us pick an arbitrary integer $p$ and positive integer $q$, such that $2^{k} < q \leq 2^{k+1}$.  By applying Dirichlet's approximation theorem to the rational number $d_{k}p/q$ we find that there exists some $1 \leq n \leq N_{k}$, such that
    \begin{equation*}
        \dnint{\frac{nd_{k}p}{q}} \leq \frac{1}{N_{k}+1}.
    \end{equation*}
    Thus, by setting $v := n d_{k} \in \mathcal{A}_{x,k}$ we find that $v \leq 2^{k} <  q$, and
    \begin{equation*}
        \dnint{\frac{vp}{q}} \leq \frac{1}{N_{k}+1} <  2^{-xk} \leq 2^{x}q^{-x}
    \end{equation*}
    since $q \leq 2^{k+1}$.  We have therefore shown that for every coprime pair $(p,q) \in \mathbb{Z} \times \mathcal{A}$, the bound
    \begin{equation*}
        \min_{v \in \mathcal{A}, v < q}\dnint{\frac{vp}{q}} \ll_{x} q^{-x},
    \end{equation*}
    holds. This demonstrates that the Diophantine density of $\mathcal{A}_{x}$ is at least $x$.

    To establish that the Diophantine density of $\mathcal{A}_{x}$ is not greater than $x$, we will start by showing that $\mathcal{A}_{x}(n) \ll n^{x}$.  
    
    Let $L := \lfloor{\log_{2}(n)}\rfloor$, so that $2^{L} \leq n \leq 2^{L+1}$.  Since overlaps can only reduce the size of the union we see that
\begin{equation*}
    \mathcal{A}_{x}(n) \leq \sum_{k=0}^{\infty}\mathcal{A}_{x,k}(n).
\end{equation*}
Moreover, we point out that
\begin{equation*}
    \mathcal{A}_{x,k}(n)  \leq \min\left\{N_{k},\left\lfloor{\frac{n}{d_{k}}}\right\rfloor\right\}.
\end{equation*}

Our plan is to split the sum into two parts, one over indices less than or equal to $L$ and one over those greater than $L$.  For $k \leq L$ we simply use the fact that $\vert{\mathcal{A}_{x,k}}\vert = N_{k} \leq 2^{xk}$ and $x > 0$ to establish the bound
\begin{equation*}
    \sum_{k=0}^{L}\mathcal{A}_{x,k}(n) \leq  \sum_{k=0}^{L}2^{xk} \leq \frac{2^{x}}{2^{x}-1}2^{xL} \leq \frac{2^{x}}{2^{x}-1}n^{x}.
\end{equation*}

For those indices greater than $L$ we take advantage of the spacing $d_{k}$ between elements of $\mathcal{A}_{x,k}$.  Since $N_{k} \leq 2^{xk}$, we have
\begin{equation*}
    \frac{2^{k}}{N_{k}} \geq 2^{(1-x)k}.
\end{equation*}
Then because $\floor{y} \geq y/2$ for any $y > 1$, we see that
\begin{equation*}
    d_{k} = \floor{\frac{2^{k}}{N_{k}}} \geq \frac{1}{2}2^{(1-x)k}.
\end{equation*}
Thus, we obtain
\begin{equation*}
    \mathcal{A}_{x,k}(n)  \leq \frac{n}{d_{k}} \leq 2n2^{-(1-x)k}.
\end{equation*}
Using the fact that $x < 1$, we find that
\begin{equation*}
    \sum_{k=L+1}^{\infty}\mathcal{A}_{x,k}(n) \leq 2n  \sum_{k=L+1}^{\infty}2^{-(1-x)k} = \frac{2n 2^{-(1-x)(L+1)}}{1- 2^{-(1-x)}}.
\end{equation*}
Since $n < 2^{L+1}$, we find that $2^{-(1-x)(L+1)} < n^{-(1-x)}$, which shows that 
\begin{equation*}
    \sum_{k=L+1}^{\infty}\mathcal{A}_{x,k}(n)  \leq \frac{2 n^{x}}{1- 2^{-(1-x)}}.
\end{equation*}
Combining the two halves of the sum gives the expected bound, namely
\begin{equation*}
    \mathcal{A}_{x}(n) \ll_{x} n^{x}.
\end{equation*}
Finally, with $C_{x}$ denoting the implied constant in the bound above, an application of Theorem \ref{thm:delta_upper_bound} shows that
\begin{equation*}
   \delta(\mathcal{A}_{x}) \leq \liminf_{q \to \infty}\frac{\log \mathcal{A}_{x}(q)}{\log q} \leq \liminf_{q \to \infty}\frac{\log C_{x} q^{x}}{\log q} = x.
\end{equation*}
By combining this with the lower bound derived above we obtain $\delta(\mathcal{A}_{x}) = x$, completing the proof.
\end{proof}

\begin{example}
    As an example, let $x = 1/2$.  Then $\delta(\mathcal{A}_{1/2}) = 1/2$ and 
    \begin{equation*}
        \mathcal{A}_{1/2} = \{1,2,4,6,8,11,12,16,18,22,23,24,30,32,33,40,44,45,46,48,55,56,64,\dots\}.
    \end{equation*}
\end{example}

\section{Diophantine density and restricted approximation}
In this section we turn to the original motivation for this paper, namely, restricted Diophantine approximation.  The next theorem gives us an idea of how well an arbitrary irrational number can be approximated by a subset of $\mathbb{N}$.
\begin{theorem}\label{thm:main}
    Let $\mathcal{A} \subseteq \mathbb{N}$ and suppose that $\delta(\mathcal{A}) > 0$.  For any irrational number $\xi$ and real number $\delta$ satisfying $0 < \delta < \delta(\mathcal{A})$, there are infinitely many pairs $(u,v) \in \mathbb{Z} \times \mathcal{A}$ such that
    \begin{equation*}
     \left\vert \xi -  \frac{u}{v} \right\vert < \frac{1}{v^{1+\delta}}
    \end{equation*}
\end{theorem}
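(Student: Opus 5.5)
Fix an auxiliary exponent $\delta'$ with $\delta < \delta' < \delta(\mathcal{A})$. By Definition \ref{def:diophantine_density} there are constants $C = C(\mathcal{A},\delta')$ and $q_0$ such that every reduced fraction $p/q$ with $q \geq q_0$ admits some $v \in \mathcal{A}$, $v < q$, with $\dnint{vp/q} \leq C q^{-\delta'}$. The plan is to feed into this the good rational approximations of $\xi$ coming from Dirichlet's theorem (Theorem \ref{thm:dirichlet}), or equivalently the convergents of $\xi$. This gives infinitely many reduced $p/q$ with $q \geq q_0$ and $|\xi - p/q| < q^{-2}$. Since $\xi$ is irrational, these $q$ are unbounded.

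For each such $p/q$, pick the $v \in \mathcal{A}$, $v<q$, supplied by the definition, and let $u$ be the integer nearest to $vp/q$. Then, by the triangle inequality,
\begin{equation*}
|v\xi - u| \leq v\left|\xi - \frac{p}{q}\right| + \dnint{\frac{vp}{q}} < \frac{v}{q^{2}} + Cq^{-\delta'} \leq \frac{1}{q} + Cq^{-\delta'} \leq (C+1)q^{-\delta'},
\end{equation*}
using $v<q$ and $\delta' \leq 1$ (Proposition \ref{prop:delta_bounds}). Dividing by $v$ and using $q > v$ again gives
\begin{equation*}
\left|\xi - \frac{u}{v}\right| < \frac{C+1}{v\,q^{\delta'}} < \frac{C+1}{v^{1+\delta'}}.
\end{equation*}
The right side is below $v^{-1-\delta}$ as soon as $v^{\delta'-\delta} > C+1$, that is, for all $v$ beyond an explicit threshold depending only on $C$, $\delta$ and $\delta'$.

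The main obstacle is to show that the chosen $v$ cannot stay bounded, so that the threshold is eventually exceeded and infinitely many distinct pairs arise. I would argue as in Proposition \ref{prop:zero_finite_sets}. Suppose the $v$'s were bounded along the infinite sequence of $q$'s. By pigeonhole, a single $v$ would occur for infinitely many $q$, and the displayed bound would give $\dnint{v\xi} \leq (C+1)q^{-\delta'} \to 0$. Hence $\dnint{v\xi}=0$, contradicting the irrationality of $\xi$.

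It follows that $v \to \infty$ along a subsequence. Each term of that subsequence with $v$ beyond the threshold yields a pair $(u,v)$ satisfying the required inequality. These pairs are infinitely many and distinct, because their second coordinates are unbounded.
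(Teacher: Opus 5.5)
Your proposal is correct and follows essentially the same route as the paper. You approximate $\xi$ by convergents $p/q$, apply the density bound with an intermediate exponent $\delta'$, use the triangle inequality with $v<q$ and $\delta'\le 1$ to get $|\xi-u/v|<(C+1)v^{-1-\delta'}$, and then exclude bounded $v$ by pigeonhole and the irrationality of $\xi$. Your intermediate bound $|v\xi-u|\le (C+1)q^{-\delta'}$ makes the pigeonhole step slightly more direct than the paper's limit argument.
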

\begin{proof}
Let $p_{n}/q_{n}$ be the $n$-th convergent of $\xi$.  By a standard result from the theory of continued fractions (see e.g. \cite{schmidt1980}) we know that 
\begin{equation*}
    \left\vert{ \xi - \frac{p_{n}}{q_{n}}}\right\vert < \frac{1}{q_{n}^{2}}.
\end{equation*}

Next, fix a real number $\delta'$, such that $\delta < \delta' < \delta(\mathcal{A})$.  Since $\delta' < \delta(\mathcal{A})$, there is a  $v_{n} \in \mathcal{A}$ and constant $C := C(\mathcal{A},\delta')$ such that for large enough $n$, we have
\begin{equation*}
    \left\vert{\frac{p_{n}}{q_{n}} - \frac{N_{n}}{v_{n}}}\right\vert < \frac{C}{q_{n}^{\delta'}v_{n}},
\end{equation*}
where $N_{n}$ denotes the nearest integer to $v_{n} p_{n}/q_{n}$.

Thus, by the triangle inequality and the fact that $v_{n} < q_{n}$, we find that
\begin{align*}
    \left\vert \xi - \frac{N_{n}}{v_{n}} \right\vert &\leq \left\vert{ \xi - \frac{p_{n}}{q_{n}}}\right\vert + \left\vert \frac{p_{n}}{q_{n}} -\frac{N_{n}}{v_{n}} \right\vert \\
    &\leq \frac{1}{q_{n}^{2}} + \frac{C}{q_{n}^{\delta'} v_{n}} \\
    &\leq \frac{1}{v_{n}^{2}} + \frac{C}{v_{n}^{1+\delta'}}.
\end{align*}
By Proposition \ref{prop:delta_bounds}, we know that $\delta(\mathcal{A}) \leq 1$, so that $ 1+ \delta' < 2$.  Thus, we obtain the bound 
\begin{equation*}
    \left\vert \xi - \frac{N_{n}}{v_{n}} \right\vert \leq \frac{C+1}{v_{n}^{1+\delta'}}.
\end{equation*}

All that we have left to show is that $v_{n} \to \infty$ as $n \to \infty$.  If this were not the case, and there were only finitely many distinct denominators, then by the pigeonhole principle there must be a single $v$ for which $v_{n} = v$,  for infinitely many $n$.  Denote the subsequence of $n$'s for which this is true by $(n_{k})_{k\geq 0}$.  Since $\delta' > 0$, we find that
\begin{equation*}
    \dnint{v\xi} = \lim_{k\to\infty}\dnint{\frac{v_{n_{k}}p_{n_{k}}}{q_{n_{k}}}} \leq \lim_{k\to\infty}Cq^{-\delta'}_{n_{k}} = 0
\end{equation*}
This is clearly a contradiction, as $\xi$ is irrational.

Therefore, we conclude that there are infinitely many distinct $v_{n} \in \mathcal{A}$, such that
\begin{equation*}
    \left\vert \xi -  \frac{N_{n}}{v_{n}} \right\vert \leq \frac{C+1}{v_{n}^{1+\delta'}}.
\end{equation*}

Finally, since $\delta' -\delta > 0$, we have $C+1 < v_{n}^{\delta'-\delta}$ for all sufficiently large $n$.  Consequently, we may write
\begin{equation*}
    \left\vert \xi -  \frac{N_{n}}{v_{n}} \right\vert < \frac{v_{n}^{\delta'-\delta}}{v_{n}^{1+\delta'}} = \frac{1}{v_{n}^{1+\delta}}.
\end{equation*}
With that our proof is complete.
\end{proof}

\begin{definition}
    The \textit{restricted irrationality exponent} $\mu_{\mathcal{A}}(\xi)$, of an irrational number $\xi$ with respect to a set $\mathcal{A} \subseteq \mathbb{N}$, is defined as the supremum of all real numbers $\mu$ such that the inequality
\begin{equation*}
    \left\vert{\xi - \frac{p}{q}}\right\vert < \frac{1}{q^{\mu}}
\end{equation*}
possesses infinitely many solutions $(p,q) \in \mathbb{Z} \times \mathcal{A}$.  When $\mathcal{A} = \mathbb{N}$, we write $\mu(\xi)$ for the ordinary irrationality exponent of $\xi$.
\end{definition}

Just as Dirichlet's theorem guarantees that $\mu(\xi) \geq 2$ for any irrational $\xi$, our Theorem \ref{thm:main} gives us the following lower bound for the restricted irrationality exponent.

\begin{corollary}
    Let $\mathcal{A} \subseteq \mathbb{N}$, and suppose that $\delta(\mathcal{A}) > 0$.  For any irrational number $\xi$, the restricted irrationality exponent satisfies 
    \begin{equation*}
        \mu_{\mathcal{A}}(\xi) \geq 1 + \delta(\mathcal{A})
    \end{equation*}
\end{corollary}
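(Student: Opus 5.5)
The corollary should follow directly from Theorem \ref{thm:main} together with the definition of $\mu_{\mathcal{A}}(\xi)$ as a supremum. Fix an irrational $\xi$ and a set $\mathcal{A}$ with $\delta(\mathcal{A}) > 0$. The plan is to show that every real number $\mu$ with $\mu < 1 + \delta(\mathcal{A})$ is dominated by some admissible exponent. Then the supremum $\mu_{\mathcal{A}}(\xi)$ is at least $1+\delta(\mathcal{A})$.

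Concretely, take any $\delta$ with $0 < \delta < \delta(\mathcal{A})$. Theorem \ref{thm:main} gives infinitely many pairs $(u,v) \in \mathbb{Z} \times \mathcal{A}$ with
\begin{equation*}
\left\vert \xi - \frac{u}{v}\right\vert < \frac{1}{v^{1+\delta}}.
\end{equation*}
One point needs checking here: ``infinitely many solutions'' in the definition of $\mu_{\mathcal{A}}(\xi)$ should mean infinitely many distinct pairs. The proof of Theorem \ref{thm:main} in fact produces $v_n \to \infty$, so the pairs are genuinely distinct. Even without that, infinitely many distinct pairs $(u,v)$ satisfying the inequality is exactly what the definition asks for. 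Hence $\mu = 1+\delta$ is admissible, and $\mu_{\mathcal{A}}(\xi) \geq 1 + \delta$.

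Finally, let $\delta \uparrow \delta(\mathcal{A})$, which is possible since $\delta(\mathcal{A}) > 0$ leaves the interval $(0,\delta(\mathcal{A}))$ nonempty. This gives $\mu_{\mathcal{A}}(\xi) \geq \sup_{0<\delta<\delta(\mathcal{A})} (1+\delta) = 1 + \delta(\mathcal{A})$. There is no real obstacle. The only subtlety is that $\delta = \delta(\mathcal{A})$ itself may not be attainable in Theorem \ref{thm:main}, which is why we argue through the supremum rather than plugging in $\delta(\mathcal{A})$ directly.
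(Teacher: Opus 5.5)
Your proposal is correct and follows the same route as the paper's proof. For each $0<\delta<\delta(\mathcal{A})$, Theorem~\ref{thm:main} shows that $1+\delta$ is an admissible exponent, so $\mu_{\mathcal{A}}(\xi)\geq 1+\delta$, and taking the supremum over such $\delta$ gives $\mu_{\mathcal{A}}(\xi)\geq 1+\delta(\mathcal{A})$.
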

\begin{proof}
    Theorem \ref{thm:main} tells us that for any $0 < \delta < \delta(\mathcal{A})$ there are infinitely many pairs $(p,q) \in \mathbb{Z} \times \mathcal{A}$ such that
    \begin{equation*}
        \left\vert{\xi - \frac{p}{q}}\right\vert < \frac{1}{q^{1+\delta}}.
    \end{equation*}
    This shows that $\mu_{\mathcal{A}}(\xi) \geq 1+\delta$ for every $0 < \delta < \delta(\mathcal{A})$, and thus $\mu_{\mathcal{A}}(\xi) \geq 1 + \delta(\mathcal{A})$, as expected.
\end{proof}

\section{Approximations involving Piatetski-Shapiro sequences}\label{sect:piatetski-shapiro}

In this section, we investigate approximations whose denominators either are elements or avoid elements of Piatetski-Shapiro sequences, which are sequences of the form $(\lfloor{n^{\tau}}\rfloor)_{n\geq 1}$ where $\tau > 1$.  Often, $\tau$ is defined to be non-integral, but here we do not impose such a restriction.  These sequences are named after Piatetski-Shapiro \cite{piatetski-shapiro1953}, who showed that for $\tau \in (1,12/11)$, there are infinitely many primes of the form $\lfloor{n^{\tau}}\rfloor$.

To prepare for the remainder of this section, let us define the set 
\begin{equation*}
    \mathcal{S}_{\tau} := \{\lfloor{n^{\tau}}\rfloor\,\vert\, n \in \mathbb{N}\},
\end{equation*}
consisting of the elements of a given Piatetski-Shapiro sequence.  The following lemma provides an upper and lower bound for the counting function of $\mathcal{S}_{\tau}$.

\begin{lemma}\label{lemma:ps_seq}
    Let $\tau >1$. The counting function for a Piatetski-Shapiro set satisfies the bound
    \begin{equation*}
    \mathcal{S}_{\tau}(n) \asymp n^{1/\tau}.
\end{equation*}
\end{lemma}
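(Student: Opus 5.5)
The plan is to count directly how many $m$ satisfy $\lfloor m^{\tau}\rfloor < n$, and then handle the possible repetitions among the values $\lfloor m^{\tau}\rfloor$. Recall that $\mathcal{S}_{\tau}(n)$ counts the \emph{distinct} elements of $\mathcal{S}_{\tau}$ that are less than $n$.

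For the upper bound, observe that $\lfloor m^{\tau}\rfloor < n$ holds exactly when $m^{\tau} < n$, since $\lfloor m^{\tau}\rfloor$ is an integer. This is equivalent to $m < n^{1/\tau}$. Every element of $\mathcal{S}_{\tau}$ below $n$ is the image of at least one such $m \geq 1$, so
\begin{equation*}
    \mathcal{S}_{\tau}(n) \leq \#\{m \in \mathbb{N} : m < n^{1/\tau}\} \leq n^{1/\tau}.
\end{equation*}

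For the lower bound, two things are needed. The first is that the map $m \mapsto \lfloor m^{\tau}\rfloor$ is injective, so that distinct $m$ give distinct elements. Since $\tau > 1$, the mean value theorem gives
\begin{equation*}
    (m+1)^{\tau} - m^{\tau} \geq \tau m^{\tau-1} \geq \tau > 1
\end{equation*}
for $m \geq 1$. Hence $\lfloor (m+1)^{\tau}\rfloor > \lfloor m^{\tau}\rfloor$, and the sequence is strictly increasing. The second is a count of the admissible $m$: the number of integers $1 \leq m < n^{1/\tau}$ is at least $\lceil n^{1/\tau}\rceil - 1 \geq n^{1/\tau} - 1$. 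For $n \geq 2^{\tau}$ we have $n^{1/\tau} \geq 2$, so this is at least $\tfrac12 n^{1/\tau}$, giving $\mathcal{S}_{\tau}(n) \geq \tfrac12 n^{1/\tau}$ for all sufficiently large $n$. Together with the upper bound, this yields $\mathcal{S}_{\tau}(n) \asymp n^{1/\tau}$.

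The only step that needs real care is injectivity, because the counting function counts elements of the set rather than indices. The derivative estimate above resolves it. Even without injectivity, the lower bound would survive: each value is hit by at most $O(1)$ indices once the gaps $(m+1)^{\tau}-m^{\tau}$ grow. Small $n$ (with $n^{1/\tau} < 2$) are excluded, which is harmless since $\asymp$ is an asymptotic statement.
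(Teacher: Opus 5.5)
Your proof is correct and follows the same route as the paper: both reduce $\lfloor m^{\tau}\rfloor < n$ to $m < n^{1/\tau}$ and count such $m$, the paper recording the exact value $\lceil n^{1/\tau}\rceil - 1$ where you use inequalities. Your mean-value-theorem check that $m \mapsto \lfloor m^{\tau}\rfloor$ is strictly increasing fills in a step the paper uses without comment, and it is needed because $\mathcal{S}_{\tau}(n)$ counts distinct elements rather than indices.
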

\begin{proof}
Our goal is to show that
\begin{equation*}
    \mathcal{S}_{\tau}(n) = \begin{cases}
        \lfloor{n^{1/\tau}}\rfloor &\text{if } n^{1/\tau} \notin \mathbb{N}, \\
        n^{1/\tau} - 1 &\text{if } n^{1/\tau} \in \mathbb{N},
    \end{cases}
\end{equation*}
which then implies the desired inequality, $\mathcal{S}_{\tau}(n) \asymp n^{1/\tau}$.  To accomplish this we will count the number of positive integers $k$ such that $\lfloor{k^{\tau}}\rfloor < n$.

The above inequality implies that $k^{\tau} < n$, and equivalently $k < n^{1/\tau}$.  Note that if $n^{1/\tau} \in \mathbb{N}$, then there are $n^{1/\tau} - 1$ positive integers $k$ less than $n^{1/\tau}$.  On the other hand, if $n^{1/\tau} \notin \mathbb{N}$, then there are $\lfloor{n^{1/\tau}}\rfloor$ such positive integers. With the value of the counting function $\mathcal{S}_{\tau}(n)$ established, our proof is now complete.  
\end{proof}

\begin{proposition}\label{prop:s_upper_bound}
    The Diophantine density of the Piatetski-Shapiro set $\mathcal{S}_{\tau}$, is bounded above by $\delta(\mathcal{S}_{\tau}) \leq 1/\tau$.
\end{proposition}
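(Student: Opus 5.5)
The plan is to combine Theorem \ref{thm:delta_upper_bound} with the counting estimate of Lemma \ref{lemma:ps_seq}. Theorem \ref{thm:delta_upper_bound} applies because $\mathcal{S}_{\tau}$ is non-empty; it contains $\lfloor 1^{\tau}\rfloor = 1$. It gives
\begin{equation*}
    \delta(\mathcal{S}_{\tau}) \leq \liminf_{q\to\infty}\frac{\log \mathcal{S}_{\tau}(q)}{\log q}.
\end{equation*}
So everything reduces to showing that this lower logarithmic density is at most $1/\tau$.

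For that I would use only the upper half of Lemma \ref{lemma:ps_seq}. The lemma gives $\mathcal{S}_{\tau}(q) \asymp q^{1/\tau}$, so there is a constant $C_{\tau}>0$ with $\mathcal{S}_{\tau}(q) \leq C_{\tau} q^{1/\tau}$ for all sufficiently large $q$. In fact, the explicit formula in its proof shows $\mathcal{S}_{\tau}(q) \leq q^{1/\tau}$, so one may take $C_{\tau}=1$. Taking logarithms for large $q$, where $\mathcal{S}_{\tau}(q)\geq 1$ so the logarithm is defined, gives
\begin{equation*}
    \frac{\log \mathcal{S}_{\tau}(q)}{\log q} \leq \frac{1}{\tau} + \frac{\log C_{\tau}}{\log q}.
\end{equation*}
The error term tends to $0$, so the $\liminf$ (indeed even the $\limsup$) is at most $1/\tau$. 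This yields $\delta(\mathcal{S}_{\tau}) \leq 1/\tau$.

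There is no genuine obstacle here; the argument mirrors the final step in the proof of Theorem \ref{thm:set_construction}. The only point needing a moment's care is the counting convention. The paper uses $\mathcal{A}(n)$ for the number of elements strictly less than $n$, and counts values of the sequence rather than indices. Repeated values $\lfloor k^{\tau}\rfloor$ can only decrease the count of the set, so the bound $\mathcal{S}_{\tau}(q)\leq \#\{k\geq 1 : k^{\tau}<q\} \leq q^{1/\tau}$ holds regardless.
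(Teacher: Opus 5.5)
Your proposal is correct and follows the paper's proof: you apply Theorem \ref{thm:delta_upper_bound} to the non-empty set $\mathcal{S}_{\tau}$ and then bound its lower logarithmic density using Lemma \ref{lemma:ps_seq}. The only difference is cosmetic: the paper uses the two-sided estimate $\mathcal{S}_{\tau}(q)\asymp q^{1/\tau}$ to assert that the $\liminf$ equals $1/\tau$, while you rightly note that the upper bound $\mathcal{S}_{\tau}(q)\leq q^{1/\tau}$ alone is enough.
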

\begin{proof}
    Using Lemma \ref{lemma:ps_seq} along with Theorem \ref{thm:delta_upper_bound} easily establishes that
    \begin{equation*}
        \delta(\mathcal{S}_{\tau}) \leq \liminf_{q\to\infty}\frac{\log \mathcal{S}_{\tau}(q)}{\log q} = \frac{1}{\tau}.
    \end{equation*}
\end{proof}

\begin{proposition}
    The Diophantine density of $\mathcal{S}_{2}$, the set of perfect squares, possesses the bounds
    \begin{equation*}
        \frac{2}{7} \leq \delta(\mathcal{S}_{2}) \leq \frac{1}{2}.
    \end{equation*}
\end{proposition}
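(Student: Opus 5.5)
The upper bound comes straight from the earlier work. Proposition \ref{prop:s_upper_bound} with $\tau = 2$ gives $\delta(\mathcal{S}_{2}) \leq 1/2$; equivalently, Lemma \ref{lemma:ps_seq} gives $\mathcal{S}_{2}(q) \asymp q^{1/2}$, and Theorem \ref{thm:delta_upper_bound} then applies. The real content is the lower bound $2/7$, which I would deduce from Zaharescu's theorem (Theorem \ref{thm:zaharescu}). The aim is to show that for every $\delta < 2/7$ there is a constant $C$ with
\begin{equation*}
    \min_{v \in \mathcal{S}_{2},\, v < q}\dnint{\frac{vp}{q}} \leq C q^{-\delta}
\end{equation*}
for all coprime $(p,q)$ with $q$ large.

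Fix a coprime pair $(p,q)$ and apply Theorem \ref{thm:zaharescu} with $\xi = p/q$. Take $N = \lfloor \sqrt{q-1} \rfloor$, so that every $n \leq N$ satisfies $n^{2} \leq q-1 < q$. Then $v = n^{2}$ is an admissible element of $\mathcal{S}_{2}$ below $q$. For each $\varepsilon > 0$, Zaharescu's theorem gives some $1 \leq n \leq N$ with
\begin{equation*}
    \dnint{\frac{n^{2}p}{q}} \ll_{\varepsilon} N^{-4/7+\varepsilon}.
\end{equation*}
The implied constant is uniform in $\xi$, which is what lets us apply it for every $p/q$ with a single constant.

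Next convert the bound into a power of $q$. For $q \geq 2$ we have $N \geq \tfrac{1}{2}\sqrt{q}$, so $N^{-4/7+\varepsilon} \ll q^{-2/7+\varepsilon/2}$. Hence
\begin{equation*}
    \min_{v\in\mathcal{S}_{2},\, v<q}\dnint{\frac{vp}{q}} \ll_{\varepsilon} q^{-2/7+\varepsilon/2}.
\end{equation*}
By the definition of Diophantine density, every exponent $2/7 - \varepsilon/2$ is admissible. Letting $\varepsilon \to 0$ gives $\delta(\mathcal{S}_{2}) \geq 2/7$.

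I expect no serious obstacle. The only points needing care are, first, that Zaharescu's bound holds with a constant independent of $\xi$, which is how Theorem \ref{thm:zaharescu} is stated. Second, the constraint $v < q$ forces $N \approx \sqrt{q}$ rather than $N \approx q$, and this is exactly what halves the exponent $4/7$ to $2/7$.
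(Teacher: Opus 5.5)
Your proposal is correct and matches the paper's proof: the upper bound comes from Proposition \ref{prop:s_upper_bound} with $\tau = 2$, and the lower bound from Zaharescu's theorem at $\xi = p/q$ with $N \approx \sqrt{q}$, which gives $q^{-2/7+\varepsilon/2}$. The only difference is that you take $N = \lfloor\sqrt{q-1}\rfloor$ where the paper takes $N = \lfloor\sqrt{q}\rfloor - 1$; this choice is immaterial, and your explicit check $N \geq \tfrac{1}{2}\sqrt{q}$ makes the conversion step slightly cleaner.
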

\begin{proof}
    We will begin by establishing the lower bound.  By setting $\xi = p/q$ and
    \begin{equation*}
        N = \lfloor{\sqrt{q}}\rfloor-1
    \end{equation*}
    in Zaharescu's theorem, we find that
    \begin{equation*}
        \min_{1\leq n \leq \lfloor{\sqrt{q}}\rfloor-1}\dnint{\frac{n^{2} p}{q}} \ll (\lfloor{\sqrt{q}}\rfloor-1)^{-4/7+\varepsilon} \ll q^{-2/7+\varepsilon/2}.
    \end{equation*}
    Thus, for every coprime pair $(p,q) \in \mathbb{Z} \times \mathbb{N}$,
    \begin{equation*}
        \min_{v\in \mathcal{S}_{2}, v < q}\dnint{\frac{v p}{q}} \ll q^{-\delta},
    \end{equation*}
    for any $\delta < 2/7$.  Consequently, by taking the supremum over all such $\delta$ we find that $\delta(\mathcal{S}_{2}) \geq 2/7$.

    For the upper bound, the previous proposition gives us $\delta(\mathcal{S}_{2}) \leq 1/2$, and our proof is complete.
\end{proof}

\begin{remark}
    It is an open question as to whether or not the value $4/7$ in Zaharescu's Theorem \ref{thm:zaharescu} can be replaced by $1$.  If this turns out to be true, then we would obtain the exact value $\delta(\mathcal{S}_{2}) = 1/2$.  

    Moreover, it is natural to wonder if, more generally, the bound given by Proposition \ref{prop:s_upper_bound} is sharp.  That is, if $\delta(\mathcal{S}_{\tau}) = 1/\tau$ for all $\tau > 1$.
\end{remark}

Next, let us take a look at Diophantine approximation using the complement of a Piatetski-Shapiro set.

\begin{proposition}\label{prop:piatetski-shapiro-dirichlet}
Let $\xi$ be an irrational number and let $\tau > 1$ be a real number.  For any $\varepsilon > 0$, there are infinitely many pairs $(p,q) \in \mathbb{Z} \times (\mathbb{N}\,\backslash \, \mathcal{S}_{\tau})$, such that
     \begin{equation*}
    \left\vert{\xi- \frac{p}{q}}\right\vert < \frac{1}{q^{2-1/\tau-\varepsilon}}.
\end{equation*} 
\end{proposition}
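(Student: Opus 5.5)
The plan is to combine Theorem \ref{thm:delta_lower_bound} with Theorem \ref{thm:main}, using Lemma \ref{lemma:ps_seq} to control the counting function.

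First, by Lemma \ref{lemma:ps_seq} we have $\mathcal{S}_{\tau}(q) \asymp q^{1/\tau}$. Hence
\begin{equation*}
    \limsup_{q\to\infty}\frac{\log \mathcal{S}_{\tau}(q)}{\log q} = \frac{1}{\tau}.
\end{equation*}
The set $\mathcal{S}_{\tau}$ is non-empty, so Theorem \ref{thm:delta_lower_bound} applies with $\mathcal{A} = \mathcal{S}_{\tau}$ and gives
\begin{equation*}
    \delta(\mathbb{N}\,\backslash\,\mathcal{S}_{\tau}) \geq 1 - \frac{1}{\tau}.
\end{equation*}
Since $\tau > 1$, this lower bound is strictly positive, so the hypothesis $\delta(\mathbb{N}\,\backslash\,\mathcal{S}_{\tau}) > 0$ of Theorem \ref{thm:main} holds.

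Next, fix $\varepsilon > 0$. If $\varepsilon \geq 1 - 1/\tau$, the exponent $2 - 1/\tau - \varepsilon$ is at most $1$. In that case we may replace $\varepsilon$ by a smaller positive value, because a smaller $\varepsilon$ gives a larger exponent and therefore a stronger inequality. So we assume $0 < \varepsilon < 1 - 1/\tau$ and set
\begin{equation*}
    \delta := 1 - \frac{1}{\tau} - \varepsilon.
\end{equation*}
Then $0 < \delta < 1 - 1/\tau \leq \delta(\mathbb{N}\,\backslash\,\mathcal{S}_{\tau})$, so $\delta$ is admissible in Theorem \ref{thm:main}. That theorem yields infinitely many pairs $(u,v) \in \mathbb{Z}\times(\mathbb{N}\,\backslash\,\mathcal{S}_{\tau})$ with
\begin{equation*}
    \left\vert \xi - \frac{u}{v}\right\vert < \frac{1}{v^{1+\delta}} = \frac{1}{v^{2-1/\tau-\varepsilon}}.
\end{equation*}
This is exactly the claimed statement.

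There is no real obstacle here; the substantive work was already done in Theorems \ref{thm:delta_lower_bound} and \ref{thm:main}. The only points needing care are the following:
\begin{itemize}
    \item the $\asymp$ bound from Lemma \ref{lemma:ps_seq} does make the limsup equal to $1/\tau$, since the implied constants contribute only $O(1/\log q)$ after taking logarithms;
    \item the reduction for large $\varepsilon$ is valid, because shrinking $\varepsilon$ only strengthens the conclusion.
\end{itemize}
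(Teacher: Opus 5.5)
Your proposal is correct and follows the paper's proof: Lemma \ref{lemma:ps_seq} bounds the limsup by $1/\tau$, Theorem \ref{thm:delta_lower_bound} gives $\delta(\mathbb{N}\,\backslash\,\mathcal{S}_{\tau}) \geq 1-1/\tau$, and Theorem \ref{thm:main} with $\delta = 1-1/\tau-\varepsilon$ finishes the argument. Your explicit reduction to $0<\varepsilon<1-1/\tau$ is valid, since $q \geq 1$. The paper leaves this step implicit in its remark that the conclusion is ``equivalent to the stated result.''
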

\begin{proof}
    From Lemma \ref{lemma:ps_seq} we know that $\mathcal{S}_{\tau}(n) \ll n^{1/\tau}$.  Then for some constant $C > 0$, we have
    \begin{equation*}
        \limsup_{n \to \infty}\frac{\log\mathcal{S}_{\tau}(n)}{\log n} \leq \limsup_{n \to \infty}\frac{\log(C n^{1/\tau})}{\log n} = \frac{1}{\tau}.
    \end{equation*}
    We can then use Theorem \ref{thm:delta_lower_bound} to produce the bound 
    \begin{equation*}
        \delta(\mathbb{N}\,\backslash\,\mathcal{S}_{\tau}) \geq 1 - \limsup_{n \to \infty}\frac{\log\mathcal{S}_{\tau}(n)}{\log n} \geq 1 - \frac{1}{\tau}.
    \end{equation*}
    Since we have required that $\tau > 1$ so that $1-1/\tau > 0$, Theorem \ref{thm:main} tells us that for any $\delta$ satisfying $0 < \delta < 1-1/\tau$, we have infinitely many pairs $(p,q) \in \mathbb{Z} \times (\mathbb{N}\,\backslash \, \mathcal{S}_{\tau})$, such that
    \begin{equation*}
        \left\vert{\xi- \frac{p}{q}}\right\vert < \frac{1}{q^{1 + \delta}}.
    \end{equation*}
    As this is equivalent to the stated result, our proof is complete.
\end{proof}

\begin{example}
    As an example, setting $\tau = 2$ in Proposition \ref{prop:piatetski-shapiro-dirichlet} shows that for any irrational number $\xi$ and any $\varepsilon > 0$, there are infinitely many pairs $(p,q) \in \mathbb{Z} \times \mathbb{N}$ where $q$ is not a perfect square, such that
\begin{equation*}
    \left\vert{\xi- \frac{p}{q}}\right\vert < \frac{1}{q^{3/2-\varepsilon}}.
\end{equation*}
\end{example}

\section{Open questions}\label{sect:open_questions}
There are a number of fundamental questions involving Diophantine density that have yet to be answered and that could provide promising avenues for further research.  We present a few such problems below.
\begin{question}
    Is the Diophantine density atomless?
\end{question}

\begin{question}
    It is immediate that $\delta(\mathcal{A} \cup \mathcal{B}) \geq \max\{\delta(\mathcal{A}), \delta(\mathcal{B})\}$.  Are there sets $\mathcal{A}$ and $\mathcal{B}$ such that $\delta(\mathcal{A} \cup \mathcal{B}) > \max\{\delta(\mathcal{A}), \delta(\mathcal{B})\}$?
\end{question}

Let us sketch one possible approach to this problem.  The idea is to construct two sets, such that each set is missing a subset of $\mathbb{N}$ which other set possesses.  So that separately, the Diophantine density of each set is limited by the missing subsets, but together, their union produces all of $\mathbb{N}$.

More specifically, let $x < 1$ and define the sequence of sets $(E_{n})_{n \geq 0}$ by
\begin{equation*}
    E_{n} := \left\{v \in \mathbb{N} \, : \, \dnint{\frac{vp_{n}}{q_{n}}} < Cq_{n}^{-x}\right\},
\end{equation*}
where $(p_{n}/q_{n})_{n \geq 0}$ is an associated sequence of rational numbers.   Now suppose we can construct the sequence $(p_{n}/q_{n})_{n \geq 0}$ recursively, so that the pairwise intersection of the even-indexed and odd-indexed sets of the sequence $(E_{n})_{n \geq 0}$ is empty.  That is,
\begin{equation*}
    \left(\bigcup_{n = 0}^{\infty}E_{2n}\right) \cap \left(\bigcup_{n = 0}^{\infty}E_{2n+1}\right) = \varnothing.
\end{equation*}

Then if we define
\begin{equation*}
    \mathcal{A} := \mathbb{N}\,/\,\left(\bigcup_{n = 0}^{\infty}E_{2n}\right) \quad \text{and} \quad \mathcal{B} := \mathbb{N}\,/\,\left(\bigcup_{n = 0}^{\infty}E_{2n+1}\right),
\end{equation*}
by construction we have $\max\{\delta(\mathcal{A}),\delta(\mathcal{B})\} \leq x$, and $\delta(\mathcal{A} \cup \mathcal{B}) = \delta(\mathbb{N}) = 1$.  Since we have required that $x < 1$, this construction gives a specific example of sets satisfying
\begin{equation*}
    \delta(\mathcal{A} \cup \mathcal{B}) > \max\{\delta(\mathcal{A}),\delta(\mathcal{B})\} .
\end{equation*}

Moreover, this would show that the Diophantine density is not subadditive. This is because for sufficiently small $x$, $\delta(\mathcal{A}) + \delta(\mathcal{B}) < 1$, and therefore
\begin{equation*}
    1 = \delta(\mathcal{A} \cup \mathcal{B}) > \delta(\mathcal{A}) + \delta(\mathcal{B}).
\end{equation*}

If Question 1 can be answered in the affirmative, then we pose a related follow-up question.  To state it, we will need the following definition.
\begin{definition}
    An infinite set $\mathcal{A} \subseteq \mathbb{N}$ is called \textit{minimally dense} with respect to the Diophantine density, if for every partition
    \begin{equation*}
        \mathcal{A} = \mathcal{A}_{1} \sqcup \mathcal{A}_{2}
    \end{equation*}
    such that $\vert{\mathcal{A}_{1}}\vert = \vert{\mathcal{A}_{2}}\vert = \infty$, we have
    \begin{equation*}
        \delta(\mathcal{A}) > \max\{\delta(\mathcal{A}_{1}),\delta(\mathcal{A}_{2})\}.
    \end{equation*}
\end{definition}
In other words, a minimally dense set cannot be partitioned into two infinite subsets, where either subset is as dense as the whole.  This behavior is in contrast to the behavior of $\mathbb{N}$, for instance, which is not minimally dense.  By Proposition \ref{prop:multiples_of_n} we know that $\mathbb{N} = m\mathbb{N} \sqcup (\mathbb{N}\, \backslash \, m\mathbb{N})$ gives a partition which violates the conditions of minimal density, since $\delta(\mathbb{N}) = \delta(m\mathbb{N})$.

\begin{question}
    Do minimally dense sets exist?  
\end{question}

\begin{question}
    How dependent is the Diophantine density of $\mathcal{A}$ on the growth of its counting function?  By Theorem \ref{thm:delta_upper_bound} we know that if $\mathcal{A}(q) \asymp q^{\alpha}$, then $0 \leq \delta(\mathcal{A}) \leq \alpha$.  Can $\delta(\mathcal{A})$ take a value that is strictly less than $\alpha$?
\end{question}

\section{Acknowledgments}
The author would like to thank Jeff Ylvisaker and Jerry Federspiel for taking an interest in this work as well as for their questions and comments which helped shape the content of this paper.

\begin{declaration}
    This work has benefited significantly from the assistance of ChatGPT 5.5, 5.6 and 6.  Notably, ChatGPT was instrumental in developing the proofs of Theorems \ref{thm:delta_lower_bound}, \ref{thm:delta_upper_bound} and \ref{thm:set_construction}.
\end{declaration}

\bibliographystyle{plain} 
\bibliography{refs} 
\end{document}